\long\def\symbolfootnote[#1]#2{\begingroup%
\def\thefootnote{\fnsymbol{footnote}}\footnote[#1]{#2}\endgroup}
\newtheoremstyle{remark}
  {}{}{}{}{\bfseries}{.}{.5em}{{\thmname{#1 }}{\thmnumber{#2}}{\thmnote{ (#3)}}}
\DeclareMathOperator\dist{dist}
\newtheorem*{theorem*}{Theorem}
\newtheorem{theorem}{Theorem}[section]
\newtheorem{lem}[theorem]{Lemma}
\newtheorem{prop}[theorem]{Proposition}
\theoremstyle{definition}
\theoremstyle{remark}
\newtheorem{rem}[theorem]{Remark}
\newtheorem{ex}{Example}
\def\div{{\rm div}}
\def\vint{\mathop{\mathchoice%
          {\setbox0\hbox{$\displaystyle\intop$}\kern 0.22\wd0%
           \vcenter{\hrule width 0.6\wd0}\kern -0.82\wd0}%
          {\setbox0\hbox{$\textstyle\intop$}\kern 0.2\wd0%
           \vcenter{\hrule width 0.6\wd0}\kern -0.8\wd0}%
          {\setbox0\hbox{$\scriptstyle\intop$}\kern 0.2\wd0%
           \vcenter{\hrule width 0.6\wd0}\kern -0.8\wd0}%
          {\setbox0\hbox{$\scriptscriptstyle\intop$}\kern 0.2\wd0%
           \vcenter{\hrule width 0.6\wd0}\kern -0.8\wd0}}%
          \mathopen{}\int}
\newcommand{\Om}{\Omega}
\newcommand{\R}{\mathbb{R}}
\newcommand{\ep}{\epsilon}
\newcommand{\Ric}{{\rm Ric}}
\newcommand{\Hess}{{\rm Hess}}
\newcommand{\supp}{{\rm supp}}
\newcommand{\vp}{\varphi}
\newcommand{\ud}{\mathrm {d}}
\def\barF{{\overline {F}}}
\def\barG{{\overline {G}}}
\def\bz{{\overline {z}}}
\newcommand{\tr}{\operatorname{tr}}
\newcommand\de{\delta}
\newcommand\cA{\mathcal{A}}   
\newcommand\fra{\mathfrak{a}}  
\newcommand\frb{\mathfrak{b}}
   \newcommand\frk{\mathfrak{k}} 
   \newcommand\frm{\mathfrak{m}}
   \newcommand\frp{\mathfrak{p}}
\definecolor{blau}{rgb}{0.1,0.0,0.9}
\definecolor{violet}{rgb}{0.54, 0.17, 0.89}
\newcommand{\blue}{\color{blau}}
\newcommand{\kom}[1]{}
\renewcommand{\kom}[1]{{\bf \blue /#1/}}
\newcounter{komcounter}
\numberwithin{komcounter}{section}
\title[]{Isoperimetric inequalities and regularity of $A$-harmonic functions on surfaces
}
\author[]{Tomasz Adamowicz{\small$^1$}}
\address{T.A.: Institute of Mathematics, Polish Academy of Sciences,
\'Sniadeckich 8, Warsaw, 00-656, Poland\/}
\email{tadamowi@impan.pl}
\author[]{Giona Veronelli}
\address{G.V.: Dipartimento di Matematica e Applicazioni, Università di Milano Bicocca, via R. Cozzi 55, 20126 Milano, Italy}
\email{giona.veronelli@unimib.it}
\begin{document}


\footnotetext[1]{T. Adamowicz was supported by a grant of National Science Center, Poland (NCN),
 UMO-2017/25/B/ST1/01955.}

\begin{abstract}
We investigate the logarithmic and power-type convexity of the length of the level curves for $a$-harmonic functions on smooth surfaces and related isoperimetric inequalities. In particular, our analysis covers the $p$-harmonic and the minimal surface equations. As an auxiliary result, we obtain higher Sobolev regularity properties of the solutions, including the $W^{2,2}$ regularity. 
 
   The results are complemented by a number of estimates for the derivatives $L'$ and $L''$ of the length of the level curve function $L$, as well as by examples illustrating the presentation.
 
  Our work generalizes results due to Alessandrini, Longinetti, Talenti and Lewis in the Euclidean setting, as well as a recent article of ours devoted to the harmonic case on surfaces.
\newline
\newline \emph{Keywords}: $a$-harmonic function, convexity, Gauss curvature, isoperimetric inequality, level curve, manifold, monotonicity formulas, $p$-harmonic function, ring domain, Schauder estimates, surface.
\newline
\newline
\emph{Mathematics Subject Classification (2020):} Primary: 35R01; Secondary: 58E20, 31C12, 53C21.
\end{abstract}

\maketitle

\section{Introduction}

The main goal of this work is to continue studies of the geometry of level curves for functions on smooth surfaces initiated in~\cite{adve} in the setting of harmonic functions. Namely, we expand the scope of the studied PDEs to include $a$-harmonic equations on surfaces under relatively mild assumptions on the operator $a$, see the presentation in this section below. Our results apply in particular to the $p$-harmonic equation. However, we do not impose the assumption of the homogeneity of degree $p$ of the operator, i.e. $a$ in \eqref{def-strong-aharm} is homogeneous,
 typically required in the $p$-harmonic setting. This equation is one of the key nonlinear counterparts of the harmonic one ($p=2$), and is sometimes called "a mascot of nonlinear analysis" (P. Drabek). The $p$-harmonic equation appears, for instance, in non-newtonian fluid dynamics, the description of the Hele--Shaw flow, the image processing and stochastic games, see~\cite[Chapter 2]{lund} and references therein.  The $p$-harmonic equation plays a key role in potential analysis~\cite{hkm} and in geometric analysis~\cite{holop,ps}, especially in the two-dimensional setting~\cite{aim, arli, man}. Our results extend a work by Alessandrini~\cite{al2} for the flat case, who investigated the convexity of the logarithm (or powers) of the length of level sets for $a$-harmonic type functions on planar annuli with constant boundary data, see also Laurence~\cite{la} and Longinetti~\cite{long} for harmonic and $p$-harmonic functions in $\R^2$. Furthermore, our $a$-harmonic variant of the no-critical points lemma presented in  Lemma~\ref{non-zero-lemma} extends Lewis's result ~\cite{lew3} for $p$-harmonic functions on $\R^2$ to the setting of surfaces. 

Another important equation investigated in this note is the minimal surface one. Such equation is one of the most fundamental geometric PDEs with connections to the Plateau problem, harmonic mappings theory and the geometry and topology of manifolds, see e.g~\cite{cm-book}. Moreover, the minimal surface equation has also been vividly studied on Riemannian surfaces, including the case of surfaces of nonpositive curvature we are more interested in. A nonexhaustive list of studied topics in this direction includes, for instance, the isoperimetric estimates on minimal graphs
\cite{bm} (in any dimension), the existence and properties of entire minimal graphs \cite{chr,km,gr} and of special minimal surfaces \cite{m}, as well as the special cases of minimal graphs over the hyperbolic plane, see e.g. \cite{cr,hnset,km,st}. Our results for minimal graphs extend previous ones by Longinetti~\cite{long-jde}, also in the Euclidean setting. Let us also mention that other well-known equations studied in the literature are covered by our work, for example the subsonic gas equation and the maximal graph equation in Lorentzian spacetime; see below.

We will now discuss the setting of $a$-harmonic equations and present the main results together with the organization of the paper. Consider a function
$a \in C^1(0,\infty)$ with $a(s)>0$ for all $s>0$ and such that, for some constants $0<\alpha\leq \beta$, it holds that

\begin{equation}\tag{A}
\qquad  0<\alpha \leq 1+\frac{a'(s)s}{a(s)}\leq \beta,\quad \hbox{for all } s>0.  \label{a-harm-ass1}
\end{equation}
Moreover, additionally we require that
\begin{equation}\label{a-harm-ass2}\tag{A'}
 sa(s)\to 0 \quad \hbox{ for }s\to 0.
\end{equation}
The latter assumption corresponds to the second part of condition (A2) in~\cite{puse-book}, one of the general assumptions imposed on the $a$-harmonic operators there. Observe that its first part, i.e. that function $sa(s)$ is strictly increasing, follows from~\eqref{a-harm-ass1} by differentiation, as $(sa(s))'\geq \alpha a(s)>0$ for $s>0$. Condition~\eqref{a-harm-ass2} is used in several results below, for instance in the proof of the key result, Theorem~\ref{thm-main1}, where we appeal to the strong maximum principle for equation~\eqref{def-weak-aharm}, see~\cite[Theorem 8.5.1]{puse-book}. Furthermore, we need~\eqref{a-harm-ass2} in the higher regularity result stated as Proposition~\ref{lem-reg}, which in turn enters in the study of critical points in Section~\ref{Section3.1}. 

Let $(M^n, g)$ be a Riemannian manifold and $\Om\subset M^n$ be an open set. In this paper, we will focus mainly (but not only) on surfaces, i.e. $n=2$. In what follows we denote the Riemannian norm of the gradient of a function $u$ by $|\nabla u|_g$ and use also $|\nabla u|$ if the metric $g$ is fixed or clearly understood from the context of presentation. We also denote by $|\nabla u|_0$ the Euclidean norm of the Euclidean gradient $u$. Namely, in a given coordinate chart $|\nabla u|^2_g:=g^{ij}\partial_iu\partial_ju$ and $|\nabla u|_0^2:=\delta^{ij}\partial_iu\partial_ju=\sum_i (\partial_iu)^2$.

Following the approach in~\cite[Chapter 3.1]{puse-book}, we say that a weakly differentiable function $u\in L^1_{loc}(\Om, \R)$ is \emph{$a$-harmonic} in $\Om$, if $a(|\nabla u(\cdot)|_g)\nabla u(\cdot)\in L^1_{loc}(\Om)$ and if $u$ is a weak solution of the equation 
\begin{equation}\label{def-strong-aharm}
\div(a(|\nabla u|_g)\nabla u)=0\quad \hbox{in }\Om,
\end{equation}
meaning that
\begin{equation}\label{def-weak-aharm}
 \int_{\Om} a(|\nabla u|_g) \langle \nabla u, \nabla \phi\rangle_g \ud M=0
\end{equation}
holds for all compactly supported test functions $\phi\in C_{0}^1(\Om)$. In what follows, we will call the divergence-type operator in~\eqref{def-strong-aharm} the \emph{$a$-harmonic operator}.

Before stating our main results, let us present some examples of operators $a$ satisfying the assumptions \eqref{a-harm-ass1} and \eqref{a-harm-ass2}.
 
\begin{ex}[The $p$-harmonic equation] The most important example of an $a$-harmonic function is the $p$-harmonic one for $a(s):=s^{p-2}$.  Condition~\eqref{a-harm-ass1} is satisfied with $\alpha=\beta:=p-1$, and since $sa(s)=s^{p-1}$, condition~\eqref{a-harm-ass2} holds as well. For a bounded domain $\Om$, a function $u\in W^{1,p}_{loc}(\Om, \R)$ satisfying ~\eqref{def-weak-aharm} with this choice of $a$ is called $p$-harmonic. For this choice of $a$, it is enough to require~\eqref{def-weak-aharm} for all compactly supported test functions $\phi\in W^{1,p}(\Om)$.
\end{ex}

\begin{ex}[The minimal surface equation] By choosing $a(s):=1/\sqrt{1+s^2}$ we retrieve the \emph{minimal surface equation}, see~\cite{al2, long-jde}. In this case condition~\eqref{a-harm-ass2} holds by a direct computation. Moreover, we have 
\begin{equation*}
0\le 1+\frac{a'(s)s}{a(s)}=(1+s^2)^{-1}\leq 1=\beta,\quad \hbox{for all } s>0.  
\end{equation*}
If one disposes of an upper bound $|\nabla u|<C$ on the gradient of a solution, then $a$ satisfies \eqref{a-harm-ass1} with $\alpha=\inf \frac{1}{1+C^2}$. In particular, this is the case if a $C^1$ solution is a priori given, as in the statement of Theorem \ref{thm-main2}.
\end{ex}

\begin{ex}[The subsonic gas equation] Basing on the discussion on pg. 262 in~\cite{gt} we may point to yet another quasilinear equation covered by our theory, here in $\R^2$. Namely, the potential gas flow equation $\div (a(|\nabla u|)\nabla u)=0$, where $u$ stands for the velocity potential of the flow and $a(|\nabla u|)$ expresses the fluid density-speed relation. Here, 
	\[
	a(s):=\left(1-\frac{\gamma-1}{2}s^2\right)^{\frac{1}{\gamma-1}},
	\]
	where $\gamma>1$ is constant and the equation is elliptic if the flow is subsonic, which means that
	$|\nabla u|^2<\frac{2}{\gamma+1}$. Indeed, under this assumption $a>\frac{2}{\gamma+1}$ and $a\in C^1(0,\infty)$. Let us verify assumptions~\eqref{a-harm-ass1} and \eqref{a-harm-ass2} on $a$. By direct computations we check that
	\[
	1+\frac{a'(s)s}{a(s)}=1-\frac{s^2}{1-\frac{\gamma-1}{2}s^2}=\frac{1-\frac{\gamma+1}{2}s^2}{1-\frac{\gamma-1}{2}s^2}.
	\]
	The latter expression takes value $1$ at $s=0$, and value $\frac{\gamma^2-1}{\gamma^2+3}$ when $s\to \frac{2}{\gamma+1}$, hence by taking $\alpha:=\frac{\gamma^2-1}{\gamma^2+3}$ and $\beta:=1$ we obtain that condition~\eqref{a-harm-ass1} holds for this $a$-harmonic equation. Moreover, by the direct argument, $sa(s)\to 0$, as $s\to 0^{+}$, giving us~\eqref{a-harm-ass2}.  
\end{ex}

\begin{ex}[The maximal graph equation in Lorentzian spacetime]\label{ex-max-Lor}$\phantom{1}$ The maximal graph equation corresponds to the choice $a(s):=1/\sqrt{1-s^2}$. Maximal surfaces in a Lorentzian manifold are spacelike surfaces with zero mean curvature. In the Lorentz-Minkowski space $\mathbb L^3$ they arise as
	local maxima for the area functional associated to variations of the surface
	by spacelike surfaces; see e.g. \cite{mati} for the physical motivations, \cite{ca,chya} for some celebrated results on maximal graphs, and \cite{al,alal} for some more recent works on surfaces. Moreover, the maximal graph equation is closely related to the Born-Infeld equation, see~\cite{bdp}. In the case of strictly spacelike solutions, $u\in C^1$ and $|\nabla u|<1$, see~\cite{basi}. In particular, given a strictly spacelike solution on a compact set, it holds that $|\nabla u|\le C$ for some constant $C<1$, so that $a(s)$ satisfies condition~\eqref{a-harm-ass1} with $\alpha=1$ and $\beta=\sup \frac{1}{1-C^2}$. Furthermore, the direct computations give us~\eqref{a-harm-ass2}.   
\end{ex}

 Let us remark that our definition of $a$-harmonic equation is a special case of a  slightly more general class of PDEs, the so called $A$-harmonic PDEs, where $A:=A(z,\nabla u)\nabla u$, and
\[
 \div(A(z, \nabla u)\nabla u)=0,
\]
see the dedicated monograph~\cite{hkm}. In particular, we retrieve ~\eqref{def-weak-aharm} in~\cite{hkm}  by setting $A:=a(|\nabla u|)\nabla u$.

In all the main examples of $a$-harmonic functions we mentioned above,  $\log a(s)$ is trivially either lower or upper bounded in a neighborhood of $0$. However, in all generality this property is not trivially implied by \eqref{a-harm-ass1} and \eqref{a-harm-ass2}; see Example \ref{ex_valtorta} in Section 3 below. 
Therefore, in order to obtain the regularity estimates in Proposition \ref{lem-reg} we need to impose an additional technical assumption on operators $a$, requiring that 
\begin{equation}\label{a-harm-ass3} \tag{A''}
\hbox{ $\log a(s)$ is either upper or lower bounded (or both) on $(0,1]$. }
\end{equation}

 The key boundary value problem studied in this work is as follows. Let $t_1,t_2\in \R$ be such that $t_1<t_2$ and let us consider a continuous up to the boundary solution of the following Dirichlet problem in an annulus $\Om$ for the $a$-harmonic operator on a two dimensional Riemannian manifold $M^2$: \begin{equation}\label{DP}\tag{DP}
 \begin{cases}
\div(a(|\nabla u|_g)\nabla u)=0\,\, \hbox{in } \Om,\\
 u|_{\Gamma_1}=t_1,\quad u|_{\Gamma_2}=t_2.
 \end{cases}
 \end{equation}
From here on, by (topological) annulus we mean a domain $\Omega\Subset M^2$ homeomorphic to the flat concentric annulus $\{x\in\R^2\,:\,1<\|x\|<R\}\subset \R^2$ for some $R>1$. Note that, up to possibly modifying the value of $R$, a non-denegenerate topological annulus is actually conformal to a flat annulus by a version of the uniformization theorem, see \cite[Lemma 3.6]{adve} specialized to smooth surfaces.
Then, $\Gamma_1$ and $\Gamma_2$ stand for the  $C^{1,\alpha}$ connected boundary components of $\Om$. The main class of examples is obtained for $\Omega:=\Omega_1\setminus \overline\Omega_2$ with $\Omega_2\Subset\Omega_1$ two domains of $M^2$ homeomorphic to a ball. However, in general the topology of $\Omega_2$, and hence of $\Omega_1$, could be nontrivial. Since now on, we will denote by $K$ the Gaussian curvature of $M^2$. It is worthy recalling that on smooth surfaces $\Ric(\nabla u, \nabla u)\equiv K|\nabla u|^2$, where $\Ric$ is the Ricci curvature tensor. Let us also add that the Dirichlet problem analogous to~\eqref{DP} can be considered on $n$-dimensional manifolds. It is our future project to extend results of this work beyond the setting of surfaces.\
 
We are now in a position to present the main results of the paper, proven in {\bf Section 2}. The following theorem gives a counterpart of Alessandrini's result~\cite[Theorem 1.1]{al2} for $a$-harmonic functions on non-positively curved Riemannian $2$-manifolds and extends our previous work in~\cite[Theorems 2.7]{adve} devoted to the harmonic case. Recall that we do not assume that the curvature is constant, i.e. we present the argument for $K=K(x)$ for $x\in M^2$. Notice that, following \cite{al2,la,long}, the second part of the assertion motivates the name isoperimetric inequality. 

In what follows by $L(t)$ we denote the length (i.e.  the $1$-dimensional Hausdorff measure) of the level set $\{u=t\}$, cf.~\eqref{iso-in-length} below. Moreover, by $L'$ and $L''$ we denote, respectively, the first and the second derivative of $L$ with respect to parameter $t$. 

\begin{theorem}[Isoperimetric inequality for $a$-harmonic equations]\label{thm-main1}
	
Let $\Omega$ be a $C^{1,\alpha}$-topological annular domain in a $2$-di\-men\-sional Riemannian manifold $(M^2, g)$ of non-positive curvature $K|_\Om \leq 0$. Let $t_1,t_2\in \R$ be such that $t_1<t_2$ and let us consider a continuous up to the boundary $a$-harmonic solution $u$ of the Dirichlet problem~\eqref{DP} in $\Om$ satisfying conditions~\eqref{a-harm-ass1}, \eqref{a-harm-ass2} and ~\eqref{a-harm-ass3}. Then  
\begin{align}
&(\ln L(t))''\geq 0\quad\hbox{for all } t\in (t_1,t_2),\hbox{ if $\beta=1$}\label{e:logconv}\\
&\left(\frac{1}{m} L^m(t)\right)''\geq 0\quad\hbox{for all } t\in (t_1,t_2),\hbox{ if $\beta\neq 1$ and $m=\frac{\beta-1}{\beta}$},\label{e:logconv-p}
\end{align}
where $\beta$ is as in \eqref{a-harm-ass1}. Moreover, the equality in the assertion holds on $(t_1,t_2)$ if and only if $u$ is $(\beta+1)$-harmonic and $K\equiv 0$, in which case $\Om$ is locally isometric to the regular (circular) annulus in the plane and
	all the level curves of $u$ are locally concentric circles. 
\end{theorem}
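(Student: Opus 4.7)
My plan is to set up the proof in a diagonal \emph{stream-function} chart in which the level-curve length $L(t)$ becomes a one-dimensional integral with very clean variational structure. First, Lemma~\ref{non-zero-lemma} (the no-critical-points lemma, which uses \eqref{a-harm-ass1} and \eqref{a-harm-ass2} via the strong maximum principle) ensures $|\nabla u|>0$ on $\Omega$, and Proposition~\ref{lem-reg} (relying also on \eqref{a-harm-ass3}) provides enough regularity to apply classical differential calculus on the level curves $\Gamma_t=\{u=t\}$. Since the $a$-harmonic equation is precisely the integrability condition for $\star\bigl(a(|\nabla u|)du\bigr)$, on the universal cover of $\Omega$ (which, $\Omega$ being a topological annulus, is a strip) I would introduce the stream function $v$ by $dv=\star\bigl(a(F)\,du\bigr)$, where $F:=|\nabla u|$. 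In the coordinates $(u,v)$ the surface metric is diagonal,
\begin{equation*}
g=\frac{du^2}{F^2}+\frac{dv^2}{\Phi^2},\qquad \Phi(s):=s\,a(s),
\end{equation*}
and the conservation law $\int_{\Gamma_t}a(F)F\,d\mathcal{H}^1\equiv C_0$ (from the divergence theorem applied to \eqref{def-strong-aharm} on $\{u<t\}$) says that $v$ has period $C_0$ along every level curve. Setting $p:=-\log\Phi$, arclength on $\Gamma_t$ is $ds=dv/\Phi=e^{p}dv$, so that
\begin{equation*}
L(t)=\int_0^{C_0}\!e^{p(t,v)}dv,\qquad L'(t)=\int_0^{C_0}\!p_u e^p\,dv,\qquad L''(t)=\int_0^{C_0}\!(p_{uu}+p_u^2)e^p\,dv.
\end{equation*}

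The crux of the argument is a pointwise identity for $p_{uu}$ obtained by combining the Brioschi formula for the Gauss curvature $K$ in the diagonal chart above with the relation $F_u=-p_uF/A(F)$, where $A(s):=1+sa'(s)/a(s)\in[\alpha,\beta]$. A careful but routine computation yields
\begin{equation*}
p_{uu}=-\frac{K}{F^2}+a(F)\,\partial_v\!\left(\frac{a(F)F_v}{F}\right)-\frac{A(F)-1}{A(F)}\,p_u^2.
\end{equation*}
Substituting this into $\int_0^{C_0}\bigl[\beta p_{uu}+(\beta-1)p_u^2\bigr]e^p\,dv$, integrating by parts the $\partial_v$ term (licit by the $C_0$-periodicity in $v$), and collecting terms gives
\begin{equation*}
\int_0^{C_0}\!\bigl[\beta p_{uu}+(\beta-1)p_u^2\bigr]e^p dv=-\beta\!\int_0^{C_0}\!\frac{K}{a(F)F^3}dv+\beta\!\int_0^{C_0}\!\frac{a(F)F_v^2}{F^3}dv+\!\int_0^{C_0}\!\frac{\beta-A(F)}{A(F)}\frac{p_u^2}{\Phi}dv.
\end{equation*}
Under the hypotheses $K\le 0$ and $A(F)\le\beta$ each of the three summands on the right is non-negative, so the left-hand side is $\ge 0$. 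On the other hand, the Cauchy--Schwarz inequality with weight $e^p$ gives $(L'(t))^2\le L(t)\cdot\int_0^{C_0}p_u^2e^p\,dv$. Combining the two yields $\int_0^{C_0}p_u^2e^p\,dv\le\beta L''(t)$, whence
\begin{equation*}
\beta\,L(t)\,L''(t)\ge (L'(t))^2,\qquad t\in(t_1,t_2).
\end{equation*}
This is equivalent to \eqref{e:logconv} for $\beta=1$ and to \eqref{e:logconv-p} for $\beta\ne 1$ with $m=(\beta-1)/\beta$, since in both cases a direct manipulation gives $(L^m/m)''\ge 0\iff \beta LL''\ge (L')^2$.

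For the rigidity statement, if equality holds on $(t_1,t_2)$ in the resulting convexity inequality then each of the three non-negative terms above must vanish. This forces $K\equiv 0$ on $\Omega$, $F_v\equiv 0$ (so $F=|\nabla u|$, and hence $\Phi$, is a function of $u$ alone), and $A(F)\equiv\beta$ wherever $p_u\ne 0$; integrating the resulting ODE $1+sa'(s)/a(s)=\beta$ gives $a(s)=Cs^{\beta-1}$, i.e.\ $u$ is $(\beta+1)$-harmonic. Since $g=F(u)^{-2}du^2+\Phi(u)^{-2}dv^2$, the chart $(u,v)$ realizes $\Omega$ as locally isometric to a flat circular annulus whose level curves $\{u=t\}$ are the concentric circles, completing the rigidity claim.

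The principal technical obstacle I foresee is the clean derivation of the pointwise identity for $p_{uu}$: the Brioschi formula outputs mixed derivatives of the metric coefficients that must be rearranged into precisely the combination which, after integration by parts in $v$, yields the three manifestly signed contributions associated with the curvature $K$, the transverse oscillation $F_v$, and the $(\beta-A)$-defect. Once that identity is in hand, the remaining steps (weighted Cauchy--Schwarz and the algebraic recasting of $\beta LL''\ge(L')^2$ as \eqref{e:logconv}/\eqref{e:logconv-p}) are routine.
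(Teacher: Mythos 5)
Your proposal is correct, and the route is genuinely different from the paper's. The paper works directly on the level sets, applying the Stokes theorem and coarea formula to get the integral formulas for $L'$ and $L''$ (Lemma~\ref{lem: L''}), substituting the $a$-harmonic identity $\Delta u=-\tfrac{a'}{a}\langle\nabla u,\nabla|\nabla u|\rangle$, and then using the Cauchy--Schwarz inequality on the decomposition of $\nabla|\nabla u|$ into tangential and normal components, plus $K\le 0$, to obtain $(L')^2\le\beta LL''$. You instead pass to a hodograph/stream-function chart: since $\star(a(|\nabla u|)du)$ is closed, the conjugate coordinate $v$ diagonalizes $g$ as $F^{-2}du^2+\Phi^{-2}dv^2$ with $\Phi=a(F)F$, and the length becomes the one-dimensional weighted integral $L(t)=\int_0^{C_0}e^{p}\,dv$, $p=-\log\Phi$. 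I verified your key pointwise identity
\[
p_{uu}=-\frac{K}{F^2}+a(F)\,\partial_v\!\left(\frac{a(F)F_v}{F}\right)-\frac{A(F)-1}{A(F)}\,p_u^2
\]
directly from the Brioschi formula for the diagonal metric, and the subsequent integration by parts in $v$ (legitimate by periodicity) does produce the three manifestly non-negative terms $-\beta\int K/(aF^3)$, $\beta\int aF_v^2/F^3$, and $\int\tfrac{\beta-A}{A}p_u^2/\Phi$; the weighted Cauchy--Schwarz then gives $\beta LL''\ge(L')^2$ exactly as you say. What your approach buys is a transparent variational structure on a flat rectangle (and an explicit separation into a curvature term, a transverse-oscillation term, and a $\beta$-defect term), at the modest cost of the Brioschi computation and the care needed to set up the global stream-function chart on the universal cover. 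What the paper's approach buys is that it stays entirely in the level-set language and never leaves the original coordinates. For the rigidity you correctly identify the consequences of equality; the only minor point worth noting is that $A(F)\equiv\beta$ is only forced where $p_u\neq 0$, and in the degenerate case $F\equiv\text{const}$ one should observe that $u$ is then harmonic (hence trivially $(\beta+1)$-harmonic), which closes the loop; the paper handles this step by reducing to $K\equiv 0$ and citing Longinetti, so neither proof spells it out fully.
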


We present now our second main result. As observed above, a special case of the $a$-harmonic equation is the minimal surface equation
\begin{equation}\label{eq-min-surf}
 \div\left( \frac{1}{\sqrt{1+|\nabla u|^2}}\nabla u\right)=0,
\end{equation}
for which $a(s)=1/\sqrt{1+s^2}$ and $\alpha<1/(1+\sup_{\Om}|\nabla u|^2)$ and $\beta=1$. Theorem \ref{thm-main1} applied to \eqref{eq-min-surf} is not sharp, since
$1+a'(s)s/a(s)<1$. However, in the setting of convex planar annuli, using the support functions of a convex set, Longinetti in~\cite[Theorem 2.2]{long-jde} showed the following sharp isoperimetric inequality
\[
 LL''-(L')^2\geq 4\pi^2.
\]
It turns out that our approach allows to extend Longinetti's result to non-necessarily convex annuli in non-positively curved smooth surfaces, thus weakening the assumptions also in the Euclidean case.

More precisely, let us consider a class of functions $a\in C^1(0,\infty)$ which satisfy the following growth condition
\begin{equation}\label{a-harm-ass-min}
0\leq \alpha\leq 1+\frac{a'(s)s}{a(s)}\leq \frac{1}{1+s^2}.
\end{equation}
On one hand this condition restricts assumption ~\eqref{a-harm-ass1} by imposing the sharper upper growth, but on the other hand we allow now $\alpha$ to be zero, as implied by the case of unbounded gradient, i.e. $s\to \infty$. 

Notice that in the case of the minimal surfaces equation the equality holds in the right hand side of \eqref{a-harm-ass-min}.

\begin{theorem}[Isoperimetric inequality for minimal surfaces equation]\label{thm-main2}
 Let $\Omega=\Omega_1\setminus\Omega_2$ be a $C^{1,\alpha}$-topological annular domain in a $2$-di\-men\-sional Riemannian manifold $(M^2, g)$ with $\Om_1$ and $\Om_2$ homeomorphic to balls, and suppose that $K|_{\Om_1} \leq 0$. Let further $t_1,t_2\in \R$ be such that $t_1<t_2$ and let us consider a continuous up to the boundary $a$-harmonic $C^2(\Omega)$ solution $u$ of the following Dirichlet problem:
 \begin{equation}\label{DP-min}
 \begin{cases}
 \div\left(a(|\nabla u|_g)\nabla u\right)=0 & \hbox{in } \Om,\\
 u|_{\Gamma_1}=t_1, \quad u|_{\Gamma_2}=t_2,
 \end{cases}
\end{equation}
with $a$ satisfying assumptions~\eqref{a-harm-ass-min}, \eqref{a-harm-ass2} and \eqref{a-harm-ass3}.

Then  
\begin{align}\label{assert-thm-main2}
 L(t)L''(t)-(L'(t))^2\geq 4\pi^2, \quad \hbox{for all }t\in (t_1,t_2).
\end{align}

Moreover, the equality in the assertion holds on $(t_1,t_2)$ if and only if $K\equiv 0$ and $u$ is the solution to the Dirichlet problem \eqref{DP-min} for the minimal surface equation with $a(s)=1/\sqrt{1+s^2}$. In such a case
$\Om$ is locally isometric to the regular (circular) annulus in the plane and all the level curves of $u$ are locally concentric circles.
\end{theorem}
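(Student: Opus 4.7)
The plan is to build on the argument for Theorem \ref{thm-main1} while exploiting the sharper structural bound $\beta(s):=1+a'(s)s/a(s) \le 1/(1+s^2)$ from assumption \eqref{a-harm-ass-min} to extract the extra isoperimetric constant $4\pi^2$. First, I would apply Lemma \ref{non-zero-lemma} to ensure $\nabla u\neq 0$ on $\Omega$, so that the level sets $\gamma_t = \{u=t\}$ form a smooth foliation of $\Omega$ by simple closed curves, each isotopic to $\Gamma_2$. Denoting $\nu = \nabla u/|\nabla u|$, $X = \nabla u/|\nabla u|^2$, and by $\kappa_g$ the geodesic curvature of $\gamma_t$, the first and second variation formulas for arc length along the flow of $X$ give
\[
L'(t) = \int_{\gamma_t} \frac{\kappa_g}{|\nabla u|}\, ds,\qquad L''(t) = \int_{\gamma_t}\!\left[\frac{\kappa_g^2}{|\nabla u|^2} + X\!\left(\frac{\kappa_g}{|\nabla u|}\right)\right]\,ds.
\]
Rewriting the $a$-harmonic PDE in a local orthonormal frame $\{\nu,T\}$ then gives $\partial_\nu|\nabla u| = -|\nabla u|\kappa_g/\beta(|\nabla u|)$, from which
\[
X\!\left(\frac{\kappa_g}{|\nabla u|}\right) = \frac{\partial_\nu\kappa_g}{|\nabla u|^2} + \frac{\kappa_g^2}{|\nabla u|^2\,\beta(|\nabla u|)}.
\]

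Next I would apply Cauchy--Schwarz, $(L'(t))^2\le L(t)\int_{\gamma_t}\kappa_g^2/|\nabla u|^2\,ds$, to reduce the target inequality to establishing $L\int_{\gamma_t} X(\kappa_g/|\nabla u|)\,ds \ge 4\pi^2$. Using $1/\beta\ge 1+|\nabla u|^2$, together with Gauss--Bonnet on the topological disk $D_t^+$ enclosed by $\gamma_t$ and containing $\Omega_2$ (which, thanks to $K\le 0$, yields $\int_{\gamma_t}\kappa_g\,ds = 2\pi-\int_{D_t^+}K\,dM\ge 2\pi$), and Cauchy--Schwarz once more (giving $L\int_{\gamma_t}\kappa_g^2\,ds\ge 4\pi^2$), one obtains
\[
L(t)\int_{\gamma_t} X\!\left(\frac{\kappa_g}{|\nabla u|}\right) ds \ge L(t)\int_{\gamma_t}\frac{\partial_\nu\kappa_g+\kappa_g^2}{|\nabla u|^2}\,ds + 4\pi^2.
\]

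The \emph{main obstacle} is then to prove $L(t)\int_{\gamma_t}(\partial_\nu\kappa_g+\kappa_g^2)/|\nabla u|^2\,ds \ge 0$. The standard Gauss--Bonnet type identity $\frac{d}{dt}\int_{\gamma_t}\kappa_g\,ds = \int_{\gamma_t}(\partial_\nu\kappa_g+\kappa_g^2)/|\nabla u|\,ds = \int_{\gamma_t} K/|\nabla u|\,ds\le 0$ delivers this quantity with the ``wrong'' weight $1/|\nabla u|$ and moreover has the ``wrong'' sign. To extract the correctly weighted inequality, I would differentiate $\int_{\gamma_t}\varphi(|\nabla u|)\kappa_g\,ds$ for a suitable $\varphi$ (the natural candidate being $\varphi(s)=1/s$) and combine the resulting identity with the PDE and, if needed, a Bochner--type identity for $\log|\nabla u|$ on the surface. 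Once this is achieved, $LL''-(L')^2\ge 4\pi^2$ follows.

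Finally, for the equality case: equality in Cauchy--Schwarz forces $\kappa_g/|\nabla u|$ and $\kappa_g$ to be constant on each $\gamma_t$; equality in Gauss--Bonnet requires $K\equiv 0$ on each $D_t^+$, hence on $\Omega$; and equality in $\beta \le 1/(1+|\nabla u|^2)$ forces $u$ to satisfy the exact minimal surface equation with $a(s)=1/\sqrt{1+s^2}$. Tracing back, $\Omega$ must be locally isometric to a flat circular annulus and $u$ must be the catenoidal solution, with all level curves concentric circles.
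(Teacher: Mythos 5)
Your overall strategy mirrors the paper's (Cauchy--Schwarz, Gauss--Bonnet on the disk enclosed by $\gamma_t$, and the sharp structural bound $1+a'(s)s/a(s)\le 1/(1+s^2)$), but the way you decompose $L''$ leaves a genuine gap that you correctly flag but do not close. After splitting $L''=\int \kappa_g^2/|\nabla u|^2 + \int X(\kappa_g/|\nabla u|)$ and applying Cauchy--Schwarz only to $L'$, you reduce to showing $L\int_{\gamma_t}(\partial_\nu\kappa_g+\kappa_g^2)/|\nabla u|^2\,ds\ge 0$, and this is precisely the step you call the ``main obstacle'' and leave unproved. The needed fact is essentially the $2$-dimensional Bochner--Kato identity of Lemma~\ref{lem:log}: unpacking the paper's computation shows that $\int_{\gamma_t}(\partial_\nu\kappa_g+\kappa_g^2)/|\nabla u|^2\,ds = \int_{\gamma_t}\bigl(|\nabla_T|\nabla u||^2/|\nabla u|^4 - K/|\nabla u|^2\bigr)\,ds$, which is nonnegative since $K\le 0$, but this identity is not standard and must be proved. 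Your proposal only gestures at a program (differentiate $\int\varphi(|\nabla u|)\kappa_g$, invoke a Bochner-type identity) without carrying it out, so the argument is incomplete as written.

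The paper sidesteps this issue by a different organization of the Cauchy--Schwarz step: it writes both $L'$ and $\int_{\gamma_t}k$ as integrals of $f\cdot g_i$ with $f=\frac{\langle\nabla u,\nabla|\nabla u|\rangle}{|\nabla u|^3}\bigl(1+\frac{a'|\nabla u|}{a}\bigr)$, $g_1=1$, $g_2=|\nabla u|$, and applies Cauchy--Schwarz to each, getting $(L')^2+(\int k)^2\le L\int f^2(1+|\nabla u|^2)$. The pointwise bound $\bigl(1+\frac{a'|\nabla u|}{a}\bigr)(1+|\nabla u|^2)\le 1$ then directly gives $(L')^2+(\int k)^2\le L L''$ using the already-derived formula \eqref{L-der2-est1} for $L''$ (which has absorbed the Bochner--Kato identity via Lemma~\ref{lem:log}, so the only leftover terms are $|\nabla_T|\nabla u||^2\ge 0$ and $-K\ge 0$). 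Combining $L'$ and $\int k$ together in one estimate, rather than treating $L'$ alone and bounding the remainder separately, is what lets the paper avoid ever needing the Riccati-type inequality on $\partial_\nu\kappa_g+\kappa_g^2$ explicitly. You should also note that Lemma~\ref{non-zero-lemma} is stated under \eqref{a-harm-ass1} with $\alpha>0$, while Theorem~\ref{thm-main2} only assumes \eqref{a-harm-ass-min} (so $\alpha$ may vanish); the paper justifies the application of the no-critical-points lemma by observing that the hypothesis $u\in C^2(\Omega)$ makes Proposition~\ref{lem-reg} trivial and that the ellipticity discussion in Section~\ref{Section3.1} survives the case $\alpha=0$, a point your proposal glosses over.
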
 

\begin{rem}\label{rem-27}
	Our assumption that in the definition of $\Omega=\Om_2\setminus \overline{\Om_1}$ both components $\Om_1$ and $\Om_2$ are homeomorphic to balls, is crucial to obtain the constant $4\pi^2$ above. Indeed, otherwise this constant changes according to the topology of component $\overline{\Om_1}$, as the Gauss-Bonnet theorem is invoked in the proof. For example, if $\Om$ is the topologically non-trivial annulus $[0,1]\times \mathbb S^1$ in $\R\times \mathbb{S}^1$, then $u(t,\theta)=t$ solves the minimal graph equation, but $LL''-(L')^2\equiv 0$ in this case. 
\end{rem}
Unlike Theorem~\ref{thm-main1}, in the above result we a priori require solutions to be $C^2$-regular. This is due to the fact that in Theorem~\ref{thm-main2} the assumption~\eqref{a-harm-ass1} is replaced by~\eqref{a-harm-ass-min}. In the proof of Theorem~\ref{thm-main1}, condition ~\eqref{a-harm-ass1} is used to prove the $W^{1,2}_{loc}$ regularity of $a^{1/2}(|\nabla u|_g)\nabla u$ (see computations in Section~\ref{Section3.1}) and, hence, to prove in Lemma~\ref{non-zero-lemma} that there are no critical points of solutions (see also the discussion in the beginning of the proof of Theorem~\ref{thm-main2}). On the other hand, the $C^2$-regularity assumption is not much restrictive, as it is satisfied in several cases where the solution exists. Indeed, let us comment on existence of solutions to the Dirichlet problem~\eqref{DP-min}. The well-known phenomenon, discussed e.g. in~\cite{sw, sim2}, shows that for the minimal surface equation operator $a(s)=1/\sqrt{1+s^2}$, the $C^2$-solution to the Dirichlet problem exists under a smallness assumption of the boundary data, see \cite[Theorem 2]{sw}, which in our case imposes the bound on $|t_2-t_1|<c$. Otherwise, the solution may fail to exist, see e.g.  Section IV in~\cite{sw}. Moreover, examples in~\cite{sw} and \cite{sim1}, provide us with wide classes of $a$-harmonic equations and boundary values $t_1,t_2$ with solvable ~\eqref{DP-min}, see ~\cite[Theorem 3]{sw} and Sections 4 and 5 in~\cite{sim1}. 

\begin{rem}
An isoperimetric inequality similar to \eqref{assert-thm-main2} can be obtained as well for maximal surfaces in Lorentzian spacetime, cf. Example~\ref{ex-max-Lor}. Namely, it holds
\begin{equation}\label{isop-ineq-Lorentz}
 \left(L'(t)+\int_{\{x\in \Om\,:\,u(x)=t\}} k\right)^2 \le L(t)L''(t),
\end{equation}
where $k$ is the curvature of the level curve with respect to the outward normal direction.
In particular, if $M^2=\R^2$ then $\int_{\{x\in \Om\,:\,u(x)=t\}} k
=2\pi$, so that we have the following relation
\[
\left( L'(t)+2\pi\right)^2 \le L(t)L''(t).
\]
See the comment after the proof of Theorem \ref{thm-main2} for the derivation of~\eqref{isop-ineq-Lorentz}. 
\end{rem}

 {\bf Section 3} contains some key auxiliary results about the regularity of solutions to our Dirichlet problem~\eqref{DP} and the no-critical points lemma. Such results, although known in the flat case, are new in the setting of surfaces.  In particular, Proposition~\ref{lem-reg} shows the $W^{2,2}$-regularity of $a$-harmonic functions on open sets on surfaces (no curvature bounds are assumed here) and the $W^{1,1}$-Sobolev regularity of the differential expression $a^{\delta}(|\nabla u|)\nabla u$ for $\delta \in  (0,1]$. The novelties here are the following. First, on the contrary to the difference quotient method used for obtaining such result in the plane or for $p$-harmonic functions, see e.g.~\cite[Section 16]{aim} and \cite{strzelecki}, we use a perturbation method which does not require the function $a$ to be homogeneous.  Furthermore, Proposition~\ref{lem-reg} corresponds to~\cite[Theorem 2.1]{cima} and our equation~\eqref{def-strong-aharm} is slightly different than \cite[(2.1)]{cima}, while conditions (2.2)-(2.3) in~\cite{cima} correspond to our condition~\eqref{a-harm-ass1}. Additionally, in order to show Proposition~\ref{lem-reg}, we extend the stream function method to the setting of surfaces. Moreover, in Theorem~\ref{th:regularity} we adapt Talenti's general result~\cite{ta} to the Riemannian setting writing a simple and short proof of the $W^{2,2}$-regularity of $a$-harmonic functions on surfaces. Such a result is to our best knowledge not stated explicitly in the literature.

The regularity proven in Proposition~\ref{lem-reg} is employed in {\bf Section 3.1}, where we discuss a complex elliptic system of equations satisfied by the complex gradient of the $a$-harmonic solution on a surface, see \eqref{eq-final-cmplx} and~\eqref{eliptic-cond}. In that context, such systems have been so far studied only for the $p$-harmonic equation. The key consequence is the complex representation formula~\eqref{F-sol} which allows us to infer information about the structure of critical points of the $a$-harmonic solution. As a byproduct we observe the unique continuation property for $a$-harmonic functions on smooth surfaces, which was not stated explicitly in the literature even for the $p$-harmonic equation, see Proposition~\ref{prop:uni-cont}. Both Proposition~\ref{lem-reg} and the complex representation of solutions on surfaces serve in proving Lemma~\ref{non-zero-lemma}. This latter generalizes the harmonic result in~\cite[Lemma 2.9]{adve} to the $a$-harmonic setting and, moreover, Lewis's $p$-harmonic result~\cite{lew3} to the setting of surfaces.

Let us emphasize that the presence of the curvature, which may vary from point to point, leads to the form of equation~\eqref{def-strong-aharm} with conformal factor depending on the point, cf.~\eqref{def-weak-plane}. This in turn causes additional difficulties  and justifies the necessity of computations in Section 3,  as we are not allowed to straightforwardly mimic the existing results and methods.

To conclude this introduction let us comment on the higher dimensional counterparts of our studies. The strategy we adopt in this work is specific to the case of surfaces, with respect to both the complexification of the gradient used to prove the absence of critical points and the estimates for $L',L''$ computed in Section \ref{sec_isoper}. However, the main problem we consider, that is estimating the measure of level sets of ($a$-)harmonic functions, makes sense also on higher dimensional manifolds. In this direction, some related results with a different approach were obtained in \cite{fmp}, \cite{afm-arXiv}, \cite{bfm}.\\

\noindent{\bf Acknowledgements.} Part of the work was conducted when G.V. visited IMPAN and T.A. visited University of Milano-Bicocca. Both authors express their gratitude to the hosting institutions for their support, hospitality and creating the scientific atmosphere. Moreover, the authors would like to express the gratitude to Daniele Valtorta for suggesting the construction leading to assumption~\eqref{a-harm-ass3}. The second author is partially supported by Indam - GNAMPA.

\section{Isoperimetric inequality}\label{sec_isoper}
In this section we show counterparts of Alessandrini's isoperimetric inequality result~\cite[Theorem 1.1]{al2} for $a$-harmonic functions, as well as for the minimal surface equation, on two-dimensional Riemannian manifolds. However, we will formulate the main problem for all dimensions $n\geq 2$, as some of our results below can be applied in the general case of smooth Riemannian $n$-manifolds.
 
 Let $(M^n,g)$ be an $n$-dimensional Riemannian manifold  with Ricci curvature bounded from below: $\Ric\geq c$ for some fixed $c\in \R$. 
 
 The following auxiliary result is a counterpart of the well-known subharmonicity property for harmonic functions in the Euclidean setting. Furthermore, this observation is an important tool in the proof of Lemma~\ref{lem: L''}.

\begin{lem}\label{lem:log}
 Let $\Om$ be a domain in a $2$-dimensional Riemannian manifold with Gaussian curvature $K=K(x)$ and $u$ be a $C^3$-function in $\Om$. Then,  
 \begin{equation}\label{log-sub-ineq}
 \Delta (\log|\nabla u|)=\div\left(\frac{\Delta u}{|\nabla u|^2} \,\nabla u\right)+K.
 \end{equation}
at points, where $|\nabla u|\neq 0$. 
\end{lem}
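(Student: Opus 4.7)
\emph{Proof proposal.} The plan is to combine the Bochner formula, the logarithmic identity $\Delta\log f=\Delta f/f-|\nabla f|^{2}/f^{2}$, and a $2$-dimensional algebraic identity for Hessians. First I would write $\Delta\log|\nabla u|=\tfrac{1}{2}\Delta\log|\nabla u|^{2}$, apply the logarithmic identity with $f=|\nabla u|^{2}$, and substitute the Bochner formula
$$\tfrac{1}{2}\Delta|\nabla u|^{2}=|\nabla^{2}u|^{2}+\langle\nabla u,\nabla\Delta u\rangle+\Ric(\nabla u,\nabla u).$$
On a surface one has $\Ric(\nabla u,\nabla u)=K|\nabla u|^{2}$, so after division by $|\nabla u|^{2}$ the curvature contribution gives precisely the term $K$ appearing on the right-hand side of \eqref{log-sub-ineq}.

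Next, I would expand the divergence in \eqref{log-sub-ineq} by the Leibniz rule,
$$\div\!\left(\frac{\Delta u}{|\nabla u|^{2}}\nabla u\right)=\frac{\langle\nabla\Delta u,\nabla u\rangle}{|\nabla u|^{2}}+\frac{(\Delta u)^{2}}{|\nabla u|^{2}}-\frac{\Delta u\,\langle\nabla|\nabla u|^{2},\nabla u\rangle}{|\nabla u|^{4}},$$
and compare it with the expression obtained for $\Delta\log|\nabla u|$ in the previous step. The third-order piece $\langle\nabla\Delta u,\nabla u\rangle/|\nabla u|^{2}$ is common to both sides and cancels, so what remains to verify is a purely second-order pointwise identity involving $|\nabla^{2}u|^{2}-(\Delta u)^{2}$, $|\nabla|\nabla u|^{2}|^{2}$, and $\langle\nabla|\nabla u|^{2},\nabla u\rangle$.

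Finally, I would check this residual identity at an arbitrary $p$ with $|\nabla u|(p)\neq 0$ by choosing normal coordinates at $p$ for which $\partial_{1}u(p)=0$ and $\partial_{2}u(p)=|\nabla u|(p)=:q$. A brief substitution then shows that, after multiplication by $|\nabla u|^{2}$, both sides collapse to $2(u_{12}^{2}-u_{11}u_{22})$, where $u_{ij}$ are the Hessian entries at $p$. The essential feature of dimension two exploited here is the Cayley--Hamilton type identity $|\nabla^{2}u|^{2}-(\Delta u)^{2}=-2\det(\nabla^{2}u)$, which holds for symmetric $2\times 2$ matrices but fails in higher dimensions. I do not anticipate any substantial obstacle: the only conceptual input is the Bochner formula, while the remaining step is a routine algebraic check specific to surfaces.
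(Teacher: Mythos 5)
Your proposal is correct and follows essentially the same route as the paper: apply the Bochner formula, expand the divergence by Leibniz, cancel the common third-order term $\langle\nabla\Delta u,\nabla u\rangle/|\nabla u|^2$, and verify the remaining purely second-order identity pointwise. The only cosmetic difference is the coordinate frame used for the final algebraic check: the paper diagonalizes the Hessian at the point (working with eigenvalues $\lambda_1,\lambda_2$), while you align one coordinate axis with $\nabla u$ so that $u_1=0$ and invoke the $2\times 2$ Cayley--Hamilton identity $|\nabla^2u|^2-(\Delta u)^2=-2\det\nabla^2u$; both reduce the residual to the same cancellation.
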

In particular, if $u$ is harmonic, then we retrieve~\cite[Lemma 2.1]{adve}. 

Similar to the case of harmonic functions on surfaces, Lemma \ref{lem:log} follows from two well-known tools in geometric analysis. The first one is the Bochner formula for $C^2$ functions on $n$-dimensional Riemannian manifolds:
\begin{equation}\label{formula-Bochner}
	\Delta \frac{|\nabla u|^2}{2}=\langle \nabla \Delta u, \nabla u\rangle+|\nabla^2u|^2+\Ric(\nabla u, \nabla u).
\end{equation}
Secondly, we use the technique similar to the one in the proof of the refined Kato (in)equality, a standard formula in geometric analysis (see for instance \cite[p. 520]{lw} or \cite[Proposition 1.3]{PRS}  and references therein).  In dimension $2$, the refined Kato's inequality turns out to be an equality; see \eqref{refined-Kato2} and also~\cite[Lemma 2.2]{adve} for a complete proof. 
\begin{lem}\label{lem: kato}
	Let $\Om$ be a domain in an $n$-dimensional Riemannian manifold and $u$ be harmonic in $\Om$. Then, at points where $|\nabla u|\neq 0$, it holds that
	\begin{equation}\label{refined-Kato}
	|\nabla^2u|^2\geq \frac{n}{n-1}\left|\nabla|\nabla u|\right|^2,\quad n\geq 2.
	\end{equation}
	Moreover, if $n=2$ then one has indeed  
	\begin{equation}\label{refined-Kato2}
|\nabla^2u|^2=2\left|\nabla|\nabla u|\right|^2.
\end{equation}
\end{lem}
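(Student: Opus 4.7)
The plan is to work pointwise. At any $p \in \Omega$ with $|\nabla u|(p) \neq 0$ I would choose an orthonormal frame $\{e_1,\dots,e_n\}$ at $p$ aligned with the gradient, i.e.\ $e_1 := \nabla u/|\nabla u|$; writing $u_i := e_i(u)$ and $u_{ij}:=\nabla^2 u(e_i,e_j)$, this gives $u_1(p) = |\nabla u|(p)$ and $u_i(p)=0$ for $i\geq 2$. Differentiating the scalar identity $|\nabla u|^2 = \sum_i u_i^2$ and evaluating at $p$ yields $|\nabla u|\,\partial_j|\nabla u| = u_1 u_{1j}$, so that
\[
|\nabla|\nabla u||^2 = \sum_{j=1}^n u_{1j}^2 \quad \text{at } p.
\]

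Next I would decompose $|\nabla^2 u|^2 = u_{11}^2 + 2\sum_{j\geq 2} u_{1j}^2 + \sum_{i,j \geq 2} u_{ij}^2$ and use harmonicity in the trace form $\sum_{i\geq 2} u_{ii} = -u_{11}$. Applying Cauchy--Schwarz to the trace of the $(n-1)\times(n-1)$ sub-Hessian,
\[
\sum_{i,j\geq 2} u_{ij}^2 \geq \sum_{i\geq 2} u_{ii}^2 \geq \frac{1}{n-1}\Bigl(\sum_{i\geq 2} u_{ii}\Bigr)^2 = \frac{u_{11}^2}{n-1},
\]
and combining these ingredients, a short rearrangement produces
\[
|\nabla^2 u|^2 - \frac{n}{n-1}\bigl|\nabla|\nabla u|\bigr|^2 \geq \frac{n-2}{n-1}\sum_{j\geq 2} u_{1j}^2 \geq 0,
\]
which is exactly \eqref{refined-Kato}.

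For the sharpening \eqref{refined-Kato2} in dimension two, the two sources of slack above both vanish: the sub-Hessian reduces to the single entry $u_{22} = -u_{11}$ (so the Cauchy--Schwarz step is automatically an equality), and the prefactor $(n-2)/(n-1)$ equals zero, forcing equality throughout. Alternatively, one can simply expand $|\nabla^2 u|^2 = u_{11}^2 + 2u_{12}^2 + u_{22}^2 = 2(u_{11}^2+u_{12}^2) = 2|\nabla|\nabla u||^2$.

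The argument is essentially linear-algebraic once the adapted frame is in place; the only point meriting attention is the identity $\partial_j |\nabla u| = u_{1j}$ at $p$, which uses $u_i(p) = 0$ for $i \geq 2$ together with the intrinsic nature of both sides. No genuine obstacle is anticipated, since harmonicity enters only via its trace form and the ambient curvature plays no role in this purely pointwise inequality.
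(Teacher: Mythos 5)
Your proof is correct, and it is the standard linear-algebraic argument for the refined Kato inequality. Note that the paper does not actually prove Lemma \ref{lem: kato}: it cites \cite[p.~520]{lw}, \cite[Proposition 1.3]{PRS}, and the authors' earlier paper \cite[Lemma 2.2]{adve} for a complete proof. Your self-contained argument is in the same spirit as those references: pick the gradient-adapted orthonormal frame $e_1 = \nabla u/|\nabla u|$ at the point $p$, observe $\nabla|\nabla u| = \nabla^2 u(e_1,\cdot)$ at $p$, split the Hessian norm into the top row, its transpose, and the $(n-1)\times(n-1)$ sub-block, apply the traced Cauchy--Schwarz inequality to the sub-block using $\sum_{i\geq 2} u_{ii} = -u_{11}$, and observe that the slack is $\frac{n-2}{n-1}\sum_{j\geq 2} u_{1j}^2$, which vanishes when $n=2$.

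One small point worth making precise, which you yourself flag: the identity $e_j(|\nabla u|) = u_{1j}$ at $p$ is not a formal differentiation of $\sum_i u_i^2$ in the naive sense, because $e_j(u_i) = u_{ij} + (\nabla_{e_j} e_i)(u)$ in a general orthonormal frame. The correction term does vanish at $p$, but for two reasons working in tandem: for $i\geq 2$ the term is annihilated by $u_i(p)=0$, and for $i=1$ one has $g(\nabla_{e_j} e_1, e_1) = \frac12 e_j\bigl(g(e_1,e_1)\bigr) = 0$, so $(\nabla_{e_j} e_1)(u) = \sum_k g(\nabla_{e_j} e_1, e_k)\,u_k = 0$ at $p$. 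Alternatively you could choose the frame to be geodesic at $p$ (parallel-transported from $p$), making $\nabla_{e_i} e_j(p)=0$ automatic. Either way, the conclusion stands, and the rest of the calculation is purely algebraic and unimpeachable.
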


\begin{proof}[Proof of Lemma \ref{lem:log}] We generalize the proof of~\cite[Lemma 2.2]{adve} and use the same notation as therein. First, we directly compute that
\begin{align*}
\div\left(\frac{\Delta u}{|\nabla u|^2}\nabla u\right)&=\frac{\langle \nabla \Delta u, \nabla u\rangle}{|\nabla u|^2}+\frac{(\Delta u)^2}{|\nabla u|^2}-2\frac{{\rm Hess}(\nabla u, \nabla u)\Delta u}{|\nabla u|^4}.
\end{align*}
On the other hand, from the Bochner formula~\eqref{formula-Bochner} we obtain
\begin{align*}
\Delta(\log(|\nabla u|))=\div\left(\frac{\nabla |\nabla u|}{|\nabla u|}\right)&=\frac{\Delta |\nabla u|}{|\nabla u|}-\frac{|\nabla |\nabla u||^2}{|\nabla u|^2}\\
&=\frac{|\nabla^2u|^2-2\left|\nabla|\nabla u|\right|^2}{|\nabla u|^2}+K+\frac{\langle \nabla \Delta u, \nabla u\rangle}{|\nabla u|^2}.
\end{align*}
Therefore,
\begin{equation}\label{eq-ident-Kato}
 \div\left(\frac{\Delta u}{|\nabla u|^2}\nabla u\right)-\Delta(\log(|\nabla u|))=\frac{(\Delta u)^2}{|\nabla u|^2}-2\frac{{\rm Hess}(\nabla u, \nabla u)\Delta u}{|\nabla u|^4}-\frac{|\nabla^2u|^2-2\left|\nabla|\nabla u|\right|^2}{|\nabla u|^2}-K.
\end{equation}
In the case of harmonic functions, i.e. $\Delta u=0$, on the $2$-dimensional manifold, an application of the Kato equality \eqref{refined-Kato2} implies that all terms above containing $\Delta u$ vanish one by one, and so we retrieve the assertion of~\cite[Lemma 2.2]{adve}. In general this is not the case, except for dimension $2$, where the appropriate terms vanish when coupled together. Indeed, choose an orthonormal system diagonalizing the Hessian of $u$ at a fixed point, so that
${\rm Hess}\,u= {\rm diag}(\lambda_1,\lambda_2) {Id}_2$. Then we have 
\begin{align*}
|\nabla u|^2 (\Delta u)^2-2{\rm Hess}(u)(\nabla u, \nabla u)\Delta u&=(u_1^2+u_2^2)(\lambda_1+\lambda_2)^2-
2(\lambda_1u_1^2+\lambda_2u_2^2)(\lambda_1+\lambda_2)\\
&=  (u_1^2+u_2^2)(\lambda_1^2+\lambda_2^2)-2(\lambda_1^2u_1^2+\lambda_2^2u_2^2).
\end{align*}
Furthermore, since $\nabla |\nabla u|^2=2\nabla_{\nabla u}\nabla u$, it holds that
\begin{align*}
\left(|\nabla^2u|^2-2\left|\nabla|\nabla u|\right|^2\right)|\nabla u|^2&=|\nabla u|^2|\nabla^2 u|^2-\frac12 |\nabla |\nabla u|^2|^2\\
&=|\nabla u|^2|\nabla^2 u|^2-2|\nabla_{\nabla u}\nabla u|^2\\
&=(u_1^2+u_2^2)(\lambda_1^2+\lambda_2^2)-2(\lambda_1^2u_1^2+\lambda_2^2u_2^2).
\end{align*}
In a consequence, only the term $-K$ remains on the right hand side of \eqref{eq-ident-Kato}, and the assertion of the lemma follows.
\end{proof}
Suppose now that $\Om\Subset M^n$ is a topological annulus, i.e. a relatively compact domain homeomorphic to the Euclidean annulus $\{x\in\R^n\,:\,1<\|x\|<2\}\subset \R^n$, also called in the literature a ring domain, or a $2$-connected domain when $n=2$. In what follows, we will assume that the two connected boundary components $\Gamma_1$ and $\Gamma_2$ are $C^{1,\alpha}$, see the discussion before the proof of Theorem~\ref{thm-main1}. 
Recall that the measure of a level set of a function $v:\Om\to\R$ is given by
\begin{equation}\label{iso-in-length}
 L(t)=\int_{\{x\in \Om\,:\,v(x)=t\}}1\,{\rm d}\mathcal{H}^{n-1},
\end{equation}
where ${\rm d}\mathcal{H}^{n-1}$ stands for the {$(n-1)$-Hausdorff} measure.

In the next lemma we recall formulas allowing us to compute the first and the second derivatives of $L$ with respect to the height of the level curve of $C^3$-functions in a topological annulus $\Om\subset M^n$. The lemma generalizes Lemma 2.1 in~\cite{al2} for the planar $A$-harmonic case and Lemma 2.6 in ~\cite{adve} for the harmonic functions on smooth surfaces.
\begin{lem}\label{lem: L''}
Suppose that  $u:\Om\to\R$  is a $C^3$- function satisfying $|\nabla u|>0$ in a topological annulus $\Om\subset M^n$ and that $u$ attains constant boundary values, respectively, $u|_{\Gamma_1}=t_1$ and $u|_{\Gamma_2}=t_2$.
Then, the following holds for all $t_1<t<t_2$:
\begin{align}
 L'(t)&=\int_{\{x\in \Om\,:\,u(x)=t\}} \div\left(\frac{\nabla u}{|\nabla u|}\right)\frac{{\rm d}\mathcal{H}^{n-1}}{|\nabla u|}
 =\int_{\{x\in \Om\,:\,u(x)=t\}} \frac{\Delta u}{|\nabla u|^2}-\frac{\left \langle \nabla u, \nabla |\nabla u| \right \rangle}{|\nabla u|^3} {\rm d}\mathcal{H}^{n-1}. \label{L-der1} 
 \end{align}
Moreover, if $\Omega$ is an annulus in a $2$-dimensional manifold with Gauss curvature $K=K(x)$, then it holds that
 \begin{align}
 L''(t) &= \int_{\{x\in \Om\,:\,u(x)=t\}} \frac{1}{|\nabla u|}\left \langle \nabla \left(\frac{1}{|\nabla u|}\right), \frac{\Delta u}{|\nabla u|^2}\nabla u-\frac{\nabla|\nabla u|}{|\nabla u|} \right \rangle-\frac{K}{|\nabla u|^2} {\rm d}\mathcal{H}^1. \label{L-der2}   
 \end{align}
\end{lem}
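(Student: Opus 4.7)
The plan is to evolve the level set $\{u=t\}$ by the flow $\phi_\tau$ of the vector field $X := \nabla u/|\nabla u|^2$, which satisfies $\langle \nabla u, X\rangle_g = 1$ and therefore maps $\{u=t\}$ to $\{u=t+\tau\}$. The first variation of the $(n-1)$-dimensional Hausdorff measure for a hypersurface evolving by $X$ gives, in any dimension,
\begin{equation*}
L'(t) = \int_{\{u=t\}} \div_\gamma X\, \ud \mathcal{H}^{n-1},
\end{equation*}
where $\div_\gamma$ is the tangential divergence on the level set. Decomposing $\div_\gamma X = \div X - \langle \nabla_N X, N\rangle$ with $N = \nabla u/|\nabla u|$, and using the pointwise identity $\nabla_{\nabla u}\nabla u = |\nabla u|\,\nabla|\nabla u|$ (equivalently $\tfrac12\nabla|\nabla u|^2 = \nabla_{\nabla u}\nabla u$), a direct calculation collapses $\div_\gamma X$ into $\div(\nabla u/|\nabla u|)/|\nabla u|$, which expands to $\Delta u/|\nabla u|^2 - \langle \nabla u, \nabla|\nabla u|\rangle/|\nabla u|^3$. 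This gives~\eqref{L-der1}.

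For~\eqref{L-der2}, the computation is specific to $n=2$ and my approach would be to introduce local orthogonal coordinates $(u,\sigma)$ adapted to the level curves: near any point where $|\nabla u|>0$, take $u$ itself as one coordinate and parameterize each level curve by $\sigma$, so that the metric takes the form
\begin{equation*}
g = |\nabla u|^{-2}\, du^2 + J(u,\sigma)^2\, d\sigma^2,\qquad J:=|\partial_\sigma|_g.
\end{equation*}
Then $L(u) = \int J\, d\sigma$, so $L''(u) = \int \partial_u^2 J\, d\sigma$. Invoking the classical Gauss-curvature formula for an orthogonal metric $A^2 du^2 + B^2 d\sigma^2$ (with $A = |\nabla u|^{-1}$, $B = J$) solves for $\partial_u^2 J$ in terms of $K$, $\partial_u J = hJ$ (with $h$ the integrand of~\eqref{L-der1}), and $\sigma$-derivatives of $|\nabla u|$. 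Integrating in $\sigma$, tangential integration by parts along the closed level curve eliminates the exact $\sigma$-derivative terms, and converting back to arc length via $\ud s = J\, d\sigma$, together with the decomposition $|\nabla |\nabla u||^2 = (\partial_N|\nabla u|)^2 + (\partial_T|\nabla u|)^2$ (where $\partial_N|\nabla u| = \langle \nabla u, \nabla|\nabla u|\rangle/|\nabla u|$), reassembles the intrinsic form~\eqref{L-der2}.

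An alternative route—closer in spirit to~\cite{al2}—is to differentiate $L'(t)$ directly along the flow, obtaining $L''(t) = \int_{\{u=t\}}[X(h) + h^2]\, \ud \mathcal{H}^1$, and then use Lemma~\ref{lem:log} to substitute for $\langle \nabla u, \nabla \Delta u\rangle + (\Delta u)^2$ in terms of $\Delta|\nabla u|$ and $K$. The main obstacle in either route is the algebraic bookkeeping: many of the intermediate terms cancel only after integration over the closed curve $\{u=t\}$ (i.e.\ modulo tangential divergences), and the curvature term $-K/|\nabla u|^2$ in~\eqref{L-der2} emerges only after careful consolidation via the Bochner-type identity of Lemma~\ref{lem:log}.
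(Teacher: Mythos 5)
Your proposal is correct, and both routes you sketch for $L''$ are viable. For $L'$, your first-variation-of-area computation via the flow of $X=\nabla u/|\nabla u|^2$ is equivalent (though differently packaged) to the paper's Stokes/coarea argument. For $L''$, your Route B --- $L''(t)=\int_{\{u=t\}}[X(h)+h^2]\,\ud\mathcal{H}^1$ followed by Lemma~\ref{lem:log} --- is essentially the paper's own proof: the paper reaches the same quantity by writing $L'(t)$ as a flux, applying Stokes and coarea once more to get $L''$ as a divergence integral on the level curve, and then substituting via Lemma~\ref{lem:log}. Your primary Route A is a genuinely different argument, and I verified it reproduces \eqref{L-der2}: with the metric $|\nabla u|^{-2}\,du^2 + J^2\,d\sigma^2$, the orthogonal Gauss curvature formula yields $|\nabla u|\,J_{uu} = -KJ/|\nabla u| - (\partial_u|\nabla u|)\,J_u - \partial_\sigma\bigl((|\nabla u|^{-1})_\sigma/J\bigr)$; after dividing by $|\nabla u|$ and integrating in $\sigma$, the last term does \emph{not} vanish under integration by parts (your ``eliminates the exact $\sigma$-derivative terms'' is a slight misdescription) but becomes $\int[\partial_s(1/|\nabla u|)]^2\,ds$, which is exactly the tangential piece of $|\nabla|\nabla u||^2/|\nabla u|^4$ appearing when one expands \eqref{L-der2}, so the identity reassembles. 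One caveat Route A needs spelled out: $g_{u\sigma}=0$ is not automatic from merely taking $u$ as a coordinate --- one must fix $\sigma$ on a single level curve and propagate it so it is constant along integral curves of $X$; this is possible on a topological annulus because the gradient flow is a diffeomorphism between level curves. The trade-off: Route A makes the Gauss curvature enter through a classical explicit coordinate formula rather than the Bochner identity, while the paper's route (and your Route B) stays with dimension-free divergence/coarea identities until the final curvature step, which is why the paper can state \eqref{L-der1} for all $n$ and reserve the surface hypothesis for \eqref{L-der2}.
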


\begin{rem}
 Notice that $C^3$-regularity assumption is not too much restrictive, since the $a$-harmonic functions (in particular $p$-harmonic ones) are smooth outside the set of critical points $\{|\nabla u|=0\}$; see e.g. the discussion on pg. 208 in~\cite{lew3} for the $p$-harmonic equation in $\R^n$, which in view of Proposition~\ref{lem-reg} below, can be extended to the setting of $a$-harmonic equations on surfaces.
\end{rem}

\begin{proof}
We follow the lines of the proof of~\cite[Lemma 2.6]{adve}, but for the readers convenience we recall its key steps, referring to~\cite{adve} for further details.
 
 Since $|\nabla u|>0$ by assumption, then $\nu=\frac{\nabla u}{|\nabla u|}$ is a unit vector normal to the level sets of $u$. Therefore, by the definition of the function $L$ in~\eqref{iso-in-length}, the Stokes theorem and the coarea formula, we have that
 \begin{align*}
  L'(t) &=\lim_{\ep \to 0}\frac{1}{\ep}\left(\int_{\{u=t+\ep\}} \left \langle \nu, \frac{\nabla u}{|\nabla u|} \right \rangle - \int_{\{u=t\}} \left \langle \nu, \frac{\nabla u}{|\nabla u|} \right \rangle \right) \\
  &= \lim_{\ep \to 0}\frac{1}{\ep} \int_{\{t<u<t+\ep\}} \div\left(\frac{\nabla u}{|\nabla u|}\right)=\int_{\{u=t\}} \frac{1}{|\nabla u|}\div\left(\frac{\nabla u}{|\nabla u|}\right) \\
&=\int_{\{u=t\}} \frac{1}{|\nabla u|^3}(\Delta u |\nabla u|-\langle \nabla u, \nabla|\nabla u|\rangle),
 \end{align*} 
 where the latter equality is obtained by direct computations, thus giving assertion~\eqref{L-der1}.
 
In order to show assertion~\eqref{L-der2} we first notice that
 \[
   \frac{\Delta u}{|\nabla u|^2}=\left \langle \frac{\Delta u \nabla u}{|\nabla u|^3}, \frac{\nabla u}{|\nabla u|} \right \rangle.
 \]
This observation together with computations involving~\eqref{L-der1}, the Stokes theorem and the coarea formula imply that:
 \begin{align}
 L''(t)&=\lim_{\ep \to 0}\frac{L'(t+\ep)- L'(t)}{\ep} \nonumber \\
 &= \lim_{\ep \to 0}\frac{1}{\ep} \int_{\{t<u<t+\ep\}} -\div\left(\frac{\nabla |\nabla u|}{|\nabla u|^2}-\frac{\Delta u \nabla u}{|\nabla u|^3}\right)\nonumber \\
 &=\lim_{\ep \to 0}\frac{1}{\ep} \int_{\{t<u<t+\ep\}} \div\left(\nabla \left(\frac{1}{|\nabla u|}\right)+\frac{\Delta u \nabla u}{|\nabla u|^3}\right)\nonumber \\
 &=\lim_{\ep \to 0}\frac{1}{\ep} \int_{\{t<u<t+\ep\}} \Delta\left(\frac{1}{|\nabla u|}\right)+\div\left(\frac{\Delta u \nabla u}{|\nabla u|^3}\right)\nonumber \\
 &=\int_{\{u=t\}} \left(\Delta\left(\frac{1}{|\nabla u|}\right)+\div\left(\frac{\Delta u \nabla u}{|\nabla u|^3}\right)\right)\frac{{\rm d}\mathcal{H}^{n-1}}{|\nabla u|}. \label{L-der2-aux}
   \end{align} 
 In order to handle the last integral, we employ Lemma~\ref{lem:log} and compute
 \begin{align}
 \div\left(\frac{\Delta u}{|\nabla u|^3} \nabla u\right)&=\frac{1}{|\nabla u|} \div\left(\frac{\Delta u}{|\nabla u|^2} \nabla u\right) +\left \langle \nabla \left(\frac{1}{|\nabla u|}\right),\nabla u \right \rangle \frac{\Delta u}{|\nabla u|^2} \nonumber \\
 &=\frac{1}{|\nabla u|}\Delta \log |\nabla u|-\frac{K}{|\nabla u|}+\left \langle \nabla \left(\frac{1}{|\nabla u|}\right),\nabla u \right \rangle \frac{\Delta u}{|\nabla u|^2} \nonumber \\
 &=\frac{1}{|\nabla u|}\left(\frac{\Delta |\nabla u|}{|\nabla u|}-\frac{|\nabla |\nabla u||^2}{|\nabla u|^2}\right)-\frac{K}{|\nabla u|}+\left \langle \nabla \left(\frac{1}{|\nabla u|}\right),\nabla u \right \rangle \frac{\Delta u}{|\nabla u|^2}.\label{L-der2-aux2}
 \end{align}
Since
\begin{equation}\label{L-der2-aux3}
 \Delta\left(\frac{1}{|\nabla u|}\right)=\div\left(-\frac{\nabla|\nabla u|}{|\nabla u|^2}\right)=-\frac{\Delta|\nabla u|}{|\nabla u|^2}+2\frac{|\nabla|\nabla u||^2}{|\nabla u|^3},
\end{equation}
then upon combining~\eqref{L-der2-aux2} and~\eqref{L-der2-aux3} we obtain the following identity
\begin{align}
 \div\left(\frac{\Delta u}{|\nabla u|^3} \nabla u\right)+ \Delta\left(\frac{1}{|\nabla u|}\right)&=\frac{|\nabla|\nabla u||^2}{|\nabla u|^3}-\frac{K}{|\nabla u|}+\left \langle \nabla \left(\frac{1}{|\nabla u|}\right),\nabla u \right \rangle \frac{\Delta u}{|\nabla u|^2} \nonumber \\
 &=-\left \langle \nabla \left(\frac{1}{|\nabla u|}\right), \frac{\nabla|\nabla u|}{|\nabla u|} \right \rangle-\frac{K}{|\nabla u|}+\left \langle \nabla \left(\frac{1}{|\nabla u|}\right),\frac{\Delta u}{|\nabla u|^2}\nabla u \right \rangle \nonumber \\
  &=\left \langle \nabla \left(\frac{1}{|\nabla u|}\right), \frac{\Delta u}{|\nabla u|^2}\nabla u-\frac{\nabla|\nabla u|}{|\nabla u|} \right \rangle-\frac{K}{|\nabla u|}.\label{L-der2-aux4}
\end{align}
We substitute~\eqref{L-der2-aux4} into ~\eqref{L-der2-aux} to obtain assertion~\eqref{L-der2}.
 \end{proof}

We are now in a position to present the proofs of Theorems~\ref{thm-main1} and~\ref{thm-main2}, the main results of this section and of the whole paper. 

Some comments about the statement of Theorem~\ref{thm-main1} are in order. Recall that  the rigidity obtained in case of equality in \eqref{e:logconv} or \eqref{e:logconv-p} holds only locally. We remark that the same understanding applies to the harmonic case, cf.~\cite[Theorem 2.7]{adve}. Namely, in the flat case $K\equiv 0$, the level curves and $\Om$ have the geometry of planar circles and circular annulus, respectively, only in the local sense. The locality in the assertion can not be avoided. Indeed, examples of topological annuli which verify the equality in \eqref{e:logconv} and \eqref{e:logconv-p} but are not proper global subsets of $\R^2$ can be obtained for instance as finite coverings of a regular circular annulus in the plane (where the solution of the Dirichlet problem is obtained by lifting the solution on the regular circular annulus).

In \cite[Theorem 2.10]{adve}, we showed that the nonpositivity of the curvature is also a necessary condition for the log-convexity of the length of the level curves of harmonic solutions. In the more general case of $a$-harmonic functions, this opposite direction has not been investigated yet. Namely, we conjecture that if the curvature is strictly positive at some point (hence, in some open set) of $M$, then one can find an annular domain for which the solution to ~\eqref{DP} does not satisfy \eqref{e:logconv-p}. In the attempt of mimicking the harmonic proof, the main additional difficulties is that $a$-harmonic functions are not conformally invariant, so that one would need very precise asymptotic estimates for the Green kernel of the operator. Even for the $p$-harmonic operator, the sharper estimates in the literature we are aware of (see e.g. \cite[2.4]{MSR}) are not precise enough to our purpose.
\smallskip

 The $C^{1,\alpha}$-regularity assumption on the boundaries of topological annuli in subject is a consequence of the interior ball condition assumed in the no-critical points lemma and, hence, required in the proof of Theorem~\ref{thm-main1}, cf.~the discussion in~\cite[Chapter 3.2]{gt}.  It is known that the $C^{1,\alpha}$-regularity characterizes domains with both interior- and exterior- ball conditions. Notice further that \cite{al2} assumes the $C^{2,\alpha}$-regularity. 

\begin{proof}[Proof of Theorem~\ref{thm-main1}]
 Let us notice that by the maximum and minimum principles for the $a$-harmonic function $u$, see~\cite[Theorem 8.5.1]{puse-book}, we have that $\max_{\overline{\Om}} u=t_2$ and  $\min_{\overline{\Om}} u=t_1$. Since $\nabla u\not=0$ due to Lemma \ref{non-zero-lemma}, the function $u$ is $C^\infty$ smooth and \eqref{def-weak-aharm} reads:
 \begin{equation}\label{lapl-mthm}
  \Delta u = -\frac{\langle \nabla u, \nabla (a(|\nabla u|_g)}{a(|\nabla u|_g)}=-\frac{a'(|\nabla u|_g)}{a(|\nabla u|_g)} \langle \nabla u, \nabla |\nabla u|_g \rangle.
 \end{equation}
Next, we observe that the Hopf lemma holds for solution of~\eqref{lapl-mthm} in $\Om$. Indeed, it follows from Theorem 2.8.3 in~\cite{puse-book} which requires the coefficient matrix of the operator in~\eqref{lapl-mthm} to be uniformly positive definite. This leads to analogous estimate for the ratio of eigenvalues $\Lambda_2/\Lambda_1$ as for $a_\ep$ in the proof of Lemma~\ref{lem-ALR} with the same lower and upper bounds as in~\eqref{est-eigen-ratio}. Thus, the Hopf lemma holds in our setting and we get that $|\nabla u|\geq const>0$ on $\partial \Om$. Since $\nabla u\not=0$ in $\Omega$ as noted above, by the smoothness of $u$ (in fact the $C^1$-regularity is enough), we have that there exists a positive constant $c$ such that $ \min_{\Om} |\nabla u|\ge c>0$.
  
By applying~\eqref{lapl-mthm} in formulas \eqref{L-der1}-\eqref{L-der2} for $L'$ and $L''$, we find the following equation and estimate, respectively:
\begin{align} 
L'(t) &=\int_{\{x\in \Om\,:\,u(x)=t\}} \frac{\Delta u}{|\nabla u|^2}-\frac{\left \langle \nabla u, \nabla |\nabla u| \right \rangle}{|\nabla u|^3}  
=-\int_{\{x\in \Om\,:\,u(x)=t\}} \frac{\left \langle \nabla u, \nabla |\nabla u| \right \rangle}{|\nabla u|^3}\left(1+\frac{a'(|\nabla u|)}{a(|\nabla u|)}|\nabla u|\right). \label{L-der1-est1} \\
 L''(t) &= \int_{\{x\in \Om\,:\,u(x)=t\}} \frac{1}{|\nabla u|}\left \langle -\frac{\nabla |\nabla u|}{|\nabla u|^2}, -\frac{a'(|\nabla u|)|}{a(|\nabla u|)} \left \langle \nabla |\nabla u|, \frac{\nabla u}{|\nabla u|} \right \rangle \frac{\nabla u}{|\nabla u|}-\frac{\nabla|\nabla u|}{|\nabla u|} \right \rangle-\frac{K}{|\nabla u|^2} \nonumber \\ 
 &= \int_{\{x\in \Om\,:\,u(x)=t\}} \frac{1}{|\nabla u|^4}\left \{ |\nabla |\nabla u||^2 +\frac{a'(|\nabla u|)|}{a(|\nabla u|)} \left \langle \nabla |\nabla u|, \frac{\nabla u}{|\nabla u|} \right \rangle^2 |\nabla u| \right \}-\frac{K}{|\nabla u|^2} \nonumber \\
 &\geq \int_{\{x\in \Om\,:\,u(x)=t\}} \frac{1}{|\nabla u|^4} \left \langle \nabla |\nabla u|, \frac{\nabla u}{|\nabla u|} \right \rangle^2 \left \{ 1 +\frac{a'(|\nabla u|)|}{a(|\nabla u|)}  |\nabla u| \right \}\qquad \scriptsize{(K\leq 0 \hbox{ and }|\nabla |\nabla u||^2\geq \langle \nabla |\nabla u|, \frac{\nabla u}{|\nabla u|} \rangle^2)} \nonumber \\
 &\geq \frac{1}{\beta}\int_{\{x\in \Om\,:\,u(x)=t\}} \frac{1}{|\nabla u|^4} \left \langle \nabla |\nabla u|, \frac{\nabla u}{|\nabla u|} \right \rangle^2 \left \{ 1 +\frac{a'(|\nabla u|)|}{a(|\nabla u|)}  |\nabla u| \right \}^2. \label{L-der2-est1}
\end{align}
Therefore, by the above estimates and the Cauchy--Schwarz inequality we have that
\begin{align*}
(L'(t))^2&=\left(\int_{\{x\in \Om\,:\,u(x)=t\}} \frac{\left \langle \nabla u, \nabla |\nabla u| \right \rangle}{|\nabla u|^3}\left(1+\frac{a'(|\nabla u|)}{a(|\nabla u|)}|\nabla u|\right)\right)^2 \\
&\leq L(t) \int_{\{x\in \Om\,:\,u(x)=t\}} \frac{\left \langle \nabla u, \nabla |\nabla u| \right \rangle^2}{|\nabla u|^6}\left(1+\frac{a'(|\nabla u|)}{a(|\nabla u|)}|\nabla u|\right)^2 \\
&\leq \beta L(t)L''(t).
\end{align*}
From this, assertions~\eqref{e:logconv} and~\eqref{e:logconv-p} of the theorem follow immediately.

	 In order to show the second part of the assertion, suppose that $(L^m(t))''=0$. This is equivalent to $\beta LL''=(L')^2$ which then by ~\eqref{L-der2-est1} (with the $K$-term remaining), implies
\begin{align*}
(L'(t))^2
&\leq L(t) \int_{\{x\in \Om\,:\,u(x)=t\}} \frac{\left \langle \nabla u, \nabla |\nabla u| \right \rangle^2}{|\nabla u|^6} \left(1+\frac{a'(|\nabla u|)}{a(|\nabla u|)}|\nabla u|\right)^2\\
&\leq \beta L(t) \int_{\{x\in \Om\,:\,u(x)=t\}} \left(\frac{1}{|\nabla u|^4}\left \{ |\nabla |\nabla u||^2 +\frac{a'(|\nabla u|)|}{a(|\nabla u|)} \left \langle \nabla |\nabla u|, \frac{\nabla u}{|\nabla u|} \right \rangle^2 |\nabla u| \right \}-\frac{K}{|\nabla u|^2}\right)\\
&=(L'(t))^2.
\end{align*}
Since $K\leq 0$, this chain of inequalities may hold only when $K\equiv 0$ in which case we reduce the discussion to the planar case and so Theorem 3.1 in~\cite{long} gives the second assertion of the theorem (see also \cite[Theorem 1.1]{al2} and~\cite[Theorems 2.7]{adve}). 
\end{proof}

It turns out that, at least in the $p$-harmonic case, the inequality in Theorem~\ref{thm-main1} can be quantified in the setting of surfaces with pinched curvature, provided that the solution defined on the annular domain can be extended to a positive $p$-harmonic function on a large enough ball containing the given annulus.

\begin{prop}\label{thm-main2-2}
	Let $(M^2,g)$ be a complete surface and suppose that its  Gauss curvature satisfies 
	\begin{equation}\label{assm-thm-main2-2}
		-\kappa _1\leq K \leq -\kappa_2 \leq 0\quad \text{in }B_{2R}
	\end{equation}
	for some $\kappa _1\geq \kappa_2\ge 0$ and some ball $B_{2R}\subset M^2$ or radius $2R>0$. Let $1<p<\infty$ and let $u>0$ be $p$-harmonic on $B_{2R}$. Suppose that $u$ takes constant values $0<t_1<t_2$ on the boundary components of a topological annulus with $C^{1,\alpha}$-boundary $\Omega\subset B_R\subset B_{2R}\subset M^2$. Then it holds that 
	\begin{align}
		&(\ln L(t))''\geq \frac{\kappa_2 }{\kappa _1}\frac{1}{t^2},\quad \hbox{for } t\in (t_1, t_2), \hbox{ if $p=2$},\label{est-pinched-1}\\
		&\left(\frac{p-1}{p-2} L^{\frac{p-2}{p-1}}(t)\right)''\geq \frac{R^2}{1+R}\frac{\kappa_2 }{1+R\kappa _1}\frac{1}{t^2}L^{-\frac{1}{p-1}}(t)\quad\hbox{for } t\in (t_1,t_2), \hbox{ if $p\not=2$}. \label{est-pinched-2}
	\end{align}
\end{prop}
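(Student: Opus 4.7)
The approach is to refine the proof of Theorem~\ref{thm-main1} in two ways: (a) retain the curvature term $-\int_{\{u=t\}} K/|\nabla u|^2$ that was discarded there via $K\leq 0$, and (b) invoke a Cheng--Yau type gradient estimate for the positive $p$-harmonic function $u$ in order to convert the surviving integral $\int_{\{u=t\}} d\mathcal{H}^1/|\nabla u|^2$ into an explicit $t^{-2}$ bound.

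First, I would retrace the chain of estimates leading to~\eqref{L-der2-est1} with $a(s)=s^{p-2}$, so that $1+a'(s)s/a(s)\equiv p-1$. Using the sharper bound $K\leq -\kappa_2$ in place of $K\leq 0$, and combining with the Cauchy--Schwarz step already employed in the proof of Theorem~\ref{thm-main1}, one obtains the quantitative inequality
\[
L''(t) \;\geq\; \frac{(L'(t))^2}{(p-1)\,L(t)} \;+\; \kappa_2 \int_{\{u=t\}} \frac{d\mathcal{H}^1}{|\nabla u|^2}.
\]

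Next, I would invoke a Cheng--Yau type gradient estimate for the positive $p$-harmonic function $u$ on $B_{2R}$ with $K\geq -\kappa_1$: on $B_R$, and hence on $\Omega$, one has $|\nabla u|/u \leq A_{p,R,\kappa_1}$, where $A_{2,R,\kappa_1}^2=\kappa_1$ is the sharp Cheng--Yau constant in dimension two (using the Kato equality~\eqref{refined-Kato2}), while for $p\neq 2$ a local Wang--Zhang type estimate $|\nabla u|/u\leq \sqrt{\kappa_1}+1/R$ combined with the AM--GM inequality $2\sqrt{\kappa_1}/R\leq (1+\kappa_1)/R$ gives $A_{p,R,\kappa_1}^2=(1+R)(1+R\kappa_1)/R^2$. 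Since $u\equiv t$ on $\{u=t\}\subset B_R$, integrating the pointwise inequality $1/|\nabla u|^2\geq 1/(A_{p,R,\kappa_1}^2 t^2)$ yields
\[
\int_{\{u=t\}} \frac{d\mathcal{H}^1}{|\nabla u|^2} \;\geq\; \frac{L(t)}{A_{p,R,\kappa_1}^2\,t^2}.
\]

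Plugging this into the previous display leads to
\[
L''(t) \;\geq\; \frac{(L'(t))^2}{(p-1)\,L(t)} \;+\; \frac{\kappa_2\,L(t)}{A_{p,R,\kappa_1}^2\,t^2}.
\]
For $p=2$, dividing by $L(t)$ and rearranging gives $(\ln L)''(t)=L''/L - (L'/L)^2 \geq \kappa_2/(\kappa_1 t^2)$, which is~\eqref{est-pinched-1}. For $p\neq 2$, the algebraic identity
\[
\left(\tfrac{p-1}{p-2}L^{(p-2)/(p-1)}\right)''(t) \;=\; L^{-1/(p-1)}(t)\left(L''(t)-\tfrac{(L'(t))^2}{(p-1)L(t)}\right)
\]
combined with the above estimate gives~\eqref{est-pinched-2}.

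The main obstacle is establishing the gradient estimate with the precise constants claimed. The $p=2$ case exploits the dimension-two Kato equality to produce the sharp constant $\sqrt{\kappa_1}$; the $p\neq 2$ case is more delicate and requires a careful local analysis of the $p$-Laplace operator on surfaces, for which one adapts the approach of Kotschwar--Ni and Wang--Zhang to our Riemannian setting, tracking the explicit dependence on $R$ and $\kappa_1$.
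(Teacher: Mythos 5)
Your proposal follows the same strategy as the paper's proof: keep the curvature term $\int_{\{u=t\}}K/|\nabla u|^2$ in the chain of estimates from the proof of Theorem~\ref{thm-main1}, then convert it to an explicit $t^{-2}$ bound via a gradient estimate for the positive $p$-harmonic function on $B_{2R}$, and finally do the algebra on $\frac{p-1}{p-2}L^{(p-2)/(p-1)}$ (respectively $\ln L$). For $p\neq 2$ the paper invokes exactly the Wang--Zhang estimate of~\cite{waz} (Theorem 1.1), giving $|\nabla u|\leq C(p)\frac{1+\sqrt{\kappa_1}R}{R}|u|$ on $B_R$, and then uses the same AM--GM manipulation $(1+R\sqrt{\kappa_1})^2\leq(1+R)(1+R\kappa_1)$ that you propose; so your $p\neq 2$ argument matches the paper's.

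Two points where your write-up differs. First, for $p=2$ you assert the sharp local gradient bound $|\nabla u|/u\leq\sqrt{\kappa_1}$ on $B_R$, attributing it to the dimension-two Kato equality. That estimate is Yau's \emph{global} bound for positive harmonic functions on a complete surface; on a ball it is not automatic (on $B_R\subset\R^2$ with $\kappa_1=0$ it would force $u$ to be constant, which is false). The paper does not try to prove the $p=2$ case here: it simply defers to Proposition~2.13 of~\cite{adve}, where that harmonic estimate is established. You would need to either cite that result or supply a genuine argument for the claimed local constant; the Kato identity alone does not yield it. Second, carrying through the algebra literally, the chain $L''-\frac{(L')^2}{(p-1)L}\geq\kappa_2\int 1/|\nabla u|^2\geq\frac{\kappa_2 L}{A^2t^2}$ multiplied by $L^{-1/(p-1)}$ produces $L^{(p-2)/(p-1)}$ on the right-hand side of~\eqref{est-pinched-2}, not $L^{-1/(p-1)}$; you quote the paper's stated exponent without noticing that your own computation gives a different one, so you should reconcile this (the discrepancy is present in the paper's own intermediate step~\eqref{com-est-pinched}, where a factor of $L$ is dropped).
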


\begin{proof} Let $u>0$ be $p$-harmonic in $\Om$ satisfying~\eqref{assm-thm-main2-2} and $\Om\subset B_R\Subset M^2$ be an annulus such that $u$ takes constant values, respectively $t_1$ and $t_2$, on the boundary components of $\Om$. By the formula~\eqref{L-der2-est1} and the inequality following it in the proof of Theorem~\ref{thm-main1}, we have that
\begin{align*}
	(L'(t))^2&\leq (p-1) L(t)\left(L''(t)+\int_{u=t}\frac{K}{|\nabla u|^2} d\mathcal H^1\right).
	\end{align*}
Hence, we obtain that if $p\not=2$, then
\begin{equation}\label{com-est-pinched}
 -L^{\frac{p}{p-1}}\left(\frac{p-1}{p-2} L^{\frac{p-2}{p-1}}(t)\right)''\leq \int_{u=t}\frac{K}{|\nabla u|^2} d\mathcal H^1.
\end{equation}
	
Then, Theorem 1.1 in~\cite{waz} applied on ball $B_{2R}\subset M^2$ and $\kappa:=\kappa_1$ asserts that
$|\nabla u|\leq C(p) \frac{1+\sqrt{\kappa _1}R}{R} |u|$ for all points in $B_R$. Therefore,
\[
 -\kappa_2\int_{u=t}\frac{1}{|\nabla u|^2} d\mathcal H^1\leq -\frac{1}{C(p)}\frac{R^2 \kappa_2 }{(1+R\sqrt{\kappa _1})^2}\int_{u=t}\frac{1}{|u|^2} d\mathcal H^1\leq-\frac{R^2 \kappa_2 }{(1+R)(1+R\kappa _1)}\frac{1}{t^2}L(t).
\]
 In the latter inequality we use an elementary estimate:
 \[
 (1+R\sqrt{\kappa_1})^2\leq R^2\kappa_1+1+R+R\kappa_1=(R+1)(1+R\kappa_1).
 \]
These combined with inequality~\eqref{com-est-pinched} give the assertion in the case $p\not=2$. The harmonic case follows from Proposition 2.13 in~\cite{adve}
\end{proof}

We conclude this section with the proof of Theorem~\ref{thm-main2}, which improves Theorem \ref{thm-main1} in the special case of minimal surface type equations, i.e., with the assumption \eqref{a-harm-ass-min} replacing~\eqref{a-harm-ass1}.

\begin{proof}[Proof of Theorem~\ref{thm-main2}]
First, let us observe that $\nabla u\not=0$ as Lemma \ref{non-zero-lemma} applies even if the assumption \eqref{a-harm-ass1} is replaced by \eqref{a-harm-ass-min}, so that in particular $\alpha$ may vanish. Indeed, Proposition \ref{lem-reg} is trivially verified in this setting, as $u$ is assumed to be $C^2$, while the discussion in Section \ref{Section3.1} holds also if $\alpha=0$.
	
Recall the formula for \emph{the curvature of the level curve} $k:=-\div\left(\frac{\nabla u}{|\nabla u|}\right)$ with respect to the outward normal vector (the sign depends on the assumption $t_1<t_2$). Moreover, observe that as in~\eqref{lapl-mthm} we may compute the laplacian of $u$. In a consequence we get 
\[
  \int_{\{x\in \Om\,:\,u(x)=t\}} k=\int_{\{x\in \Om\,:\,u(x)=t\}}-\div\left(\frac{\nabla u}{|\nabla u|}\right)=
  \int_{\{x\in \Om\,:\,u(x)=t\}} \frac{\left \langle \nabla u, \nabla |\nabla u| \right \rangle}{|\nabla u|^3}\left(1+\frac{a'(|\nabla u|)}{a(|\nabla u|)}|\nabla u|\right)|\nabla u|.
\]
Then, by~\eqref{L-der1-est1} and by the Cauchy--Schwarz inequality we obtain that
\begin{equation}\label{est-mins2}
 (L'(t))^2+\left(  \int_{\{x\in \Om\,:\,u(x)=t\}} k\right)^2\leq L(t)\left( \int_{\{x\in \Om\,:\,u(x)=t\}} \frac{\left \langle \nabla u, \nabla |\nabla u| \right \rangle^2}{|\nabla u|^6}\left(1+\frac{a'(|\nabla u|)}{a(|\nabla u|)}|\nabla u|\right)^2(1+|\nabla u|^2)\right).
\end{equation}
 Hence, by the growth assumption~\eqref{a-harm-ass-min} and estimates~\eqref{est-mins2} and~\eqref{L-der2-est1} we arrive at the following inequality
\begin{align}
 (L'(t))^2+\left(\int_{\{x\in \Om\,:\,u(x)=t\}} k\right)^2 &\leq L(t)\left( \int_{\{x\in \Om\,:\,u(x)=t\}} \frac{\left \langle \nabla u, \nabla |\nabla u| \right \rangle^2}{|\nabla u|^6}\left(1+\frac{a'(|\nabla u|)}{a(|\nabla u|)}|\nabla u|\right)\right) \nonumber\\
 & \leq L(t) L''(t). \label{est-mins3}
\end{align} 
To complete the argument note that since $\Om$ is an annular domain whose components $\Om_1$ and $\Om_2$ are homeomorphic to balls and $K\leq 0$, the Gauss--Bonnet theorem can be applied as follows
\[
 \int_{\{x\in \Om\,:\,u(x)=t\}} k=2\pi-\int_{{\rm Int }\{x\in \Om\,: u(x)=t\}} k\geq 2\pi.
\]
Here, we abuse notation and by ${\rm Int }\{x\in \Om\,: u(x)=t\}$ we denote the interior of the subset of $\Omega_1$ bounded by the level curve $\{x\in \Om\,: u(x)=t\}$. Then, upon applying this inequality at~\eqref{est-mins3} we arrive at assertion~\eqref{assert-thm-main2}. By the discussion analogous to the one in the end of the proof of Theorem~\ref{thm-main1}, we obtain that inequality in~\eqref{assert-thm-main2} holds only if $K\equiv 0$ and if the function $a(s)=1/\sqrt{1+s^2}$ for which in~\eqref{a-harm-ass-min} the equality holds. Moreover, equality has to hold in the Cauchy-Schwarz inequalities in \eqref{est-mins2}, so that $|\nabla u|$ and $\langle\nabla u,\nabla|\nabla u|\rangle$ have to be constant on $\{u=t\}$ for every $t\in(t_1,t_2)$. In particular, $k$ has to be constant and the level sets are concentric circles. Accordingly, $\Omega$ is locally isometric to a standard concentric annulus and by an explicit computation it turns out that $u$ must be locally the graph of a slice of catenoid.
\end{proof}
We remark that in the special case of the minimal surface equation, defined by $a(s)=1/\sqrt{1+s^2}$, in order to deduce that the solution has no critical points we do not need the whole machinery we introduced in Section \ref{sec_critical}. Indeed, the solution is a harmonic function with respect to the graph metric $du\otimes du$. Since the critical points of a function do not depend on the underlying metric, the easier techniques for harmonic functions apply, see \cite{adve}.

A similar technique as in the proof of Theorem~\ref{thm-main2} could also be applied to strictly spacelike solutions to the equation for maximal surfaces in the Lorentzian space, see Example \ref{ex-max-Lor}. In that case, one has that $1+\frac{a'(s)}{a(s)}s=(1-s^2)^{-1}$, and 
\begin{align*}
	\left(L'(t)+  \int_{\{x\in \Om\,:\,u(x)=t\}} k\right)^2
	&=	  \left(\int_{\{x\in \Om\,:\,u(x)=t\}} \frac{\left \langle \nabla u, \nabla |\nabla u| \right \rangle}{|\nabla u|^3}\frac{|\nabla u|-1}{1-|\nabla u|^2}\right)^2.
\end{align*}
As $|\nabla u|<1$, it holds $(1-|\nabla u|)^2 \le 1-|\nabla u|^2$, so that
\begin{align*}
	\left(L'(t)+  \int_{\{x\in \Om\,:\,u(x)=t\}} k\right)^2
	&\le	  L(t)\left(\int_{\{x\in \Om\,:\,u(x)=t\}} \frac{\left \langle \nabla u, \nabla |\nabla u| \right \rangle^2}{|\nabla u|^6}\frac{1}{1-|\nabla u|^2}\right)\le L(t)L''(t).
\end{align*}
In particular, if $M^2=\R^2$ then $\int_{\{x\in \Om\,:\,u(x)=t\}} k
=2\pi$ so that we have the relation
\[
\left( L'(t)+2\pi\right)^2 \le L(t)L''(t).
\]

\section{Critical points of $a$-harmonic functions on smooth surfaces}\label{sec_critical}

In this section we show that $a$-harmonic functions have isolated critical points on Riemannian surfaces, a property similar to the corresponding one for the $a$-harmonic (in particular $p$-harmonic) functions in the plane. The proof relies on the complex representation of the $a$-harmonic equation and on the associated regularity lemma which allows to reformulate the equation as a complex first order system of PDEs. Unlike the flat case of $\R^2$ the complex gradient need not be a quasiregular mapping, see pg. 6 in~\cite{strzelecki}. Nevertheless, the theory of complex first order systems permits us to conclude that the zeros of the gradient are isolated and form a discrete set of points, see~\cite{bjs}.

For the readers convenience we now recall some information stated in the preceding sections.

Let $(M^2, g)$ be a Riemannian surface and $\Om\subset M^2$ be an open set. Moreover, we consider function $a\in C^1(0,\infty)$ such that it satisfies the following assumptions:
\begin{align*}
&\hbox{\eqref{a-harm-ass1}}\quad  0<\alpha \leq 1+\frac{a'(s)s}{a(s)}\leq \beta,\quad \hbox{for all } s>0;\qquad \hbox{\eqref{a-harm-ass2}}\quad  sa(s)\to 0 \quad \hbox{ for }s\to 0, \\
&\hbox{\eqref{a-harm-ass3}}\quad \log a(s) \hbox{is either upper or lower bounded (or both) on $(0,1]$. }
\end{align*}

Recall that in the setting of Riemannian surfaces we may locally introduce the isothermal coordinates, denoted $z=(x,y)$, in which the metric $g$ takes the diagonal form with the conformal factor $\lambda>0$ a smooth, bounded and strictly positive function. Namely, $g(X,Y)=\lambda^2(x) \langle X, Y\rangle$ for any pair of vectors $X,Y$ at $x\in M^2$.
 
 We denote local bounds of $\lambda$ as follows: $0<c_g\leq \lambda<C_g<\infty$. Therefore, in the isothermal coordinates, we have that
\begin{equation}\label{grad-coord}
|\nabla u|_g=\lambda^{-1}|\nabla u|_0
\end{equation}
and equation~\eqref{def-strong-aharm}
in coordinates reads:
\begin{equation}\label{def-weak-plane}
\frac{\partial}{\partial x}\left(a(\lambda^{-1}(z)|\nabla u(z)|_0) u_x \right)+\frac{ \partial}{\partial y}\left(a(\lambda^{-1}(z)|\nabla u(z)|_0) u_y\right)=0,
\end{equation}
interpreted in the distributional sense.

Our next goal is to find the complex representation of ~\eqref{def-weak-plane}, following~\cite{strzelecki, alr}, and for this we need the auxiliary regularity observation, well known for $p$-harmonic functions in the plane and on smooth surfaces, as well as for planar $A$-harmonic equations with the $\delta$-monotonicity condition, see~\cite[Chapter 16]{aim} and also~\cite[Section 3]{dus}. 
\begin{prop}\label{lem-reg}
 Let $u$ be an $a$-harmonic function, i.e. satisfying~\eqref{def-weak-plane} in an open set $\Om\subset M$, under assumptions~\eqref{a-harm-ass1},  \eqref{a-harm-ass2} and~\eqref{a-harm-ass3}. Then it holds that $u\in W^{2,2}_{loc}(\Omega)$ and that
 \begin{equation}\label{a-delta-reg}
 a^{\delta}(|\nabla u|_{g})\nabla u\in W^{1,2}_{loc}(\Om)
 \end{equation}
 for any $\delta\in[0,1]$.
\end{prop}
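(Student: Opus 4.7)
The strategy is a regularization-plus-stream-function scheme. I would first move to local isothermal coordinates so that \eqref{def-weak-aharm} reduces to the planar divergence-form equation \eqref{def-weak-plane} weighted by the smooth bounded conformal factor $\lambda$, then replace $a$ by a smoothed, non-degenerate approximant $a_\epsilon(s):=a(\sqrt{s^2+\epsilon^2})$, which preserves the structural bounds \eqref{a-harm-ass1} (with the same $\alpha,\beta$) and is smooth and strictly positive at $s=0$. Let $u_\epsilon$ solve the regularized equation on a compactly contained subdomain $\Omega'\Subset\Omega$ with boundary values $u|_{\partial \Omega'}$. Since the regularized equation is uniformly elliptic with smooth coefficients, classical Schauder and De Giorgi--Nash--Moser theory give $u_\epsilon\in C^\infty(\Omega')$, with uniform (in $\epsilon$) $W^{1,\beta+1}_{loc}$ energy bounds obtained by testing with $u-u_\epsilon$ and using \eqref{a-harm-ass1}.

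Next I would exploit the two-dimensionality to introduce, on simply connected pieces of $\Omega'$, a stream function $v_\epsilon$ conjugate to $u_\epsilon$ through
\begin{equation*}
\partial_{x_2} v_\epsilon = \lambda\,a_\epsilon(\lambda^{-1}|\nabla u_\epsilon|_0)\,\partial_{x_1} u_\epsilon,\qquad \partial_{x_1} v_\epsilon = -\lambda\,a_\epsilon(\lambda^{-1}|\nabla u_\epsilon|_0)\,\partial_{x_2} u_\epsilon.
\end{equation*}
Existence follows from $\mathrm{div}(a_\epsilon\nabla u_\epsilon)=0$, and the strict monotonicity of $s\mapsto s\,a_\epsilon(s)$ (which is built into \eqref{a-harm-ass1}) allows us to invert and to check that $v_\epsilon$ satisfies a dual $a$-harmonic equation $\mathrm{div}(\tilde a_\epsilon(|\nabla v_\epsilon|_g)\nabla v_\epsilon)=0$ with structural constants $(\tilde\alpha,\tilde\beta)=(1/\beta,1/\alpha)$, and $|\nabla v_\epsilon|_g = a_\epsilon(|\nabla u_\epsilon|_g)|\nabla u_\epsilon|_g$. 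This conjugacy is the two-dimensional surface extension of the planar stream-function duality.

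I would then differentiate the $u_\epsilon$-equation in a coordinate direction, test with $\eta^2\partial_k u_\epsilon$ for a cutoff $\eta$, absorb the cross terms that come from the dependence of $a_\epsilon$ on $|\nabla u_\epsilon|$ by using \eqref{a-harm-ass1}, and pick up the curvature/$\lambda$ contributions which are controlled on compact subsets. This yields a weighted Caccioppoli estimate
\begin{equation*}
\int_{\Omega'}\eta^2\, a_\epsilon(|\nabla u_\epsilon|_g)\,|\nabla^2 u_\epsilon|_g^2\,\mathrm{d}M \leq C\int |\nabla \eta|_g^2\, a_\epsilon(|\nabla u_\epsilon|_g)\,|\nabla u_\epsilon|_g^2\,\mathrm{d}M,
\end{equation*}
with $C$ independent of $\epsilon$; the same estimate applied to $v_\epsilon$ via the dual equation gives the analogous bound weighted by $\tilde a_\epsilon(|\nabla v_\epsilon|_g)$. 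Here assumption \eqref{a-harm-ass3} enters decisively: if $\log a$ is bounded below on $(0,1]$ then $a_\epsilon\geq c>0$ on that range and the $u_\epsilon$-Caccioppoli immediately delivers uniform $W^{2,2}_{loc}$ bounds; if instead $\log a$ is bounded above then $\tilde a_\epsilon\geq c>0$ in the corresponding regime and the $v_\epsilon$-bound together with the conjugacy relations transfers into a uniform $W^{2,2}_{loc}$ bound for $u_\epsilon$. A standard weak compactness argument as $\epsilon\to 0$ then gives $u\in W^{2,2}_{loc}(\Omega)$.

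Finally, for \eqref{a-delta-reg}, the chain rule combined with \eqref{a-harm-ass1}, which bounds $|a'(s)s|/a(s)$ by $\beta-1$, produces the pointwise inequality
\begin{equation*}
|\nabla(a^\delta(|\nabla u|_g)\nabla u)|_g^2 \leq C(\beta,\delta)\,a^{2\delta}(|\nabla u|_g)\,|\nabla^2 u|_g^2, \qquad \delta\in[0,1],
\end{equation*}
so the weighted Caccioppoli bound above together with the local boundedness of $a(|\nabla u|_g)$ inherited from the $C^{1,\alpha}_{loc}$ regularity of $u_\epsilon$ closes the argument. The main obstacle is the rigorous extension of the stream-function construction to surfaces with variable conformal factor together with the precise computation of the structure constants $(\tilde\alpha,\tilde\beta)$ of the dual nonlinearity; this duality is what turns the one-sided hypothesis \eqref{a-harm-ass3} into a genuinely two-sided lower bound on the weight, bypassing pathologies of the type exhibited in Example~\ref{ex_valtorta}.
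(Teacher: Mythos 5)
Your high-level scheme — regularize, exploit the stream function on surfaces, split into two cases according to \eqref{a-harm-ass3}, and close the $\delta$-range with a chain-rule estimate — captures the overall architecture of the paper's argument, and your computation of the dual structure constants $(\tilde\alpha,\tilde\beta)=(1/\beta,1/\alpha)$ and the relation $|\nabla v|_g=a(|\nabla u|_g)|\nabla u|_g$ matches the paper. However, the central estimate you propose does not suffice, and the ``transfer'' step in your second case has a genuine gap.

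The problem is that differentiating the equation and testing with $\eta^2\partial_k u_\epsilon$ yields only a \emph{weighted} Caccioppoli estimate,
\[
\int\eta^2\,a_\epsilon(|\nabla u_\epsilon|)\,|\nabla^2 u_\epsilon|^2\le C\int|\nabla\eta|^2\,a_\epsilon(|\nabla u_\epsilon|)\,|\nabla u_\epsilon|^2,
\]
and, because $b(|\nabla v|)=1/a(|\nabla u|)$ and $|\nabla^2 v|\simeq a(|\nabla u|)|\nabla^2 u|$ up to the structure constants, the analogous estimate for the conjugate $v_\epsilon$ is \emph{the same} weighted inequality written in different variables; it yields $\int\eta^2 a\,|\nabla^2 u|^2\le C$, not $\int\eta^2|\nabla^2 u|^2\le C$. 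In your first case ($\log a$ bounded below, so $a\ge c>0$) the weight is harmless and you do obtain $u_\epsilon\in W^{2,2}_{loc}$. But in the second case ($\log a$ bounded above, e.g.\ $a(s)=s^{p-2}$ with $p>2$) the weight $a(|\nabla u_\epsilon|)$ degenerates where $|\nabla u_\epsilon|$ is small, and your claim that ``the $v_\epsilon$-bound together with the conjugacy relations transfers into a uniform $W^{2,2}_{loc}$ bound for $u_\epsilon$'' does not follow: the $v$-Caccioppoli controls $\nabla(a\nabla u)\in L^2$, which is the $\delta=1$ endpoint, while $W^{2,2}$ regularity of $u$ is the $\delta=0$ endpoint, and inverting $\nabla v=\star(a\nabla u)$ to reach it requires dividing by $a$, precisely where $a$ may vanish. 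This is not a cosmetic issue; the difficulty of $W^{2,2}$ regularity for $p$-harmonic functions with $p>2$ is exactly this degeneracy, and a weighted Caccioppoli cannot overcome it on its own.

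What the paper does instead is to avoid the differentiated Caccioppoli entirely and invoke a genuinely two-dimensional tool, namely Talenti's $W^{2,2}$ estimate based on the Cordes condition (stated as Theorem~\ref{th:regularity} and proved in the Appendix). That estimate gives an \emph{unweighted} bound $\|u_\epsilon\|_{W^{2,2}(U)}\le C$ whose constant depends only on $\alpha,\beta$, the conformal factor $\lambda$, and $\|u_\epsilon\|_{W^{1,2}(V)}$ — not on $\inf a_\epsilon$ — by combining the pointwise algebraic inequality
\[
\sum_{i,j}w_{ij}^2+2c_1\det\Hess\,w\le c_2\Big(\sum_{i,j}a_{ij}w_{ij}\Big)^2
\]
with the identity $\int_V\det\Hess\,w=0$ for compactly supported $w$. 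This is the step your proposal is missing. The paper then applies this unweighted estimate in the bounded-$a$ case directly, and in the other case it applies the same argument to the conjugate stream function $v$ solving the $b$-equation (with $b$ bounded, thanks to \eqref{a-harm-ass3}), obtaining $b^{1-\delta}\nabla v\in W^{1,2}_{loc}$ for all $\delta\in[0,1]$ and in particular, at $\delta=0$, $b\nabla v=\star\nabla u\in W^{1,2}_{loc}$. To repair your proposal you would need to replace the weighted Caccioppoli by an unweighted $W^{2,2}$ estimate of Talenti/Cordes type, or otherwise produce an a priori bound with a weight that does not degenerate along the critical set.
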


In particular, when $\delta=1/2$, for the $p$-harmonic equation in $M$ we retrieve the assertion in~\cite{strzelecki}, namely that $|\nabla u|_g^{\frac{p-2}{2}}\nabla u\in W^{1,2}_{loc}(M)$ for $p\geq 2$. However, as explained later, our method allows to handle the $p$-harmonic functions for the whole range of $1<p<\infty$.

\begin{ex}\label{ex_valtorta}
	Even if the assumption \eqref{a-harm-ass3} is verified in all the significant examples, it is not automatically implied by \eqref{a-harm-ass1} and \eqref{a-harm-ass2}, so that we need to require it. Indeed, the following example, suggested to us by D. Valtorta, shows that one can find a positive function $a$ on $(0,\infty)$ satisfying \eqref{a-harm-ass1} and \eqref{a-harm-ass2}, but which is neither upper bounded nor bounded away from $0$. To this end, one can implement the changes of variables $t=-\log s$ and $f(t)=\log a(s)$. Then, \eqref{a-harm-ass1} and \eqref{a-harm-ass2} become
		\[
		1-\beta < \partial_t f(t) < 1-\alpha <1,\quad\text{and}\quad \lim_{t\to\infty} f(t)-t=-\infty.
		\] 
		A (two-side) unbounded function $f$ with this properties can be quite easily constructed, for instance, by smoothing out a piece-wise linear function $\tilde f$ which oscillates between $-t$ and $\sqrt{t}$, with the slope $\partial_t\tilde f(t)$ equal to either  $\frac{1}{2}$ or $-2$ outside corners.
\end{ex}

The following result is similar to Proposition 2.1 in~\cite{alr}, proved therein in the Euclidean setting, where $\lambda$ is constant.

\begin{lem}\label{lem-ALR}
 Let $\Om\subset M$ be a bounded connected open set and assume that $\partial \Om$ satisfies the interior and exterior ball condition. Let further $\phi\in C^2(\overline{\Om})$. If \eqref{a-harm-ass1} holds, then there exists a unique $u\in C^{1,\gamma}_{loc}\cap C(\overline{\Om})$ solving weakly the following Dirichlet problem:
\begin{equation*}
 \begin{cases}
 \div(a(|\nabla u|_g)\nabla u)=0 & \hbox{in } \Om,\\
 u|_{\partial \Om}=\phi|_{\partial \Om}.
 \end{cases}
\end{equation*}
 \end{lem}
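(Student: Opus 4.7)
The plan is to apply the direct method of the calculus of variations, observing that \eqref{def-strong-aharm} is the Euler--Lagrange equation of a strictly convex energy functional, and then to handle interior regularity and boundary continuity via classical techniques, adapted to the Riemannian setting by passing to isothermal coordinates.

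First I would define $F(s) := \int_0^s t\, a(t)\, dt$ for $s\geq 0$. By \eqref{a-harm-ass1}, $(s a(s))' \geq \alpha\, a(s) > 0$, so $s \mapsto s a(s)$ is strictly increasing and hence $F$ is strictly convex. Integrating \eqref{a-harm-ass1} also yields two-sided power growth bounds of the form $c_1 s^{\beta+1} - c_2 \leq F(s) \leq c_3 s^{\alpha+1} + c_4$ in the appropriate ranges, which imply coercivity and weak lower semicontinuity on the affine space $\phi + W^{1,\beta+1}_0(\Om)$ of the energy
\begin{equation*}
\cE(u) := \int_\Om F(|\nabla u|_g)\, dM.
\end{equation*}
Since $\xi \mapsto F(|\xi|_g)$ is strictly convex, the direct method produces a unique minimizer $u \in \phi + W^{1,\beta+1}_0(\Om)$, and this minimizer is a weak solution of \eqref{def-weak-aharm}. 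Uniqueness can equivalently be obtained by subtracting the weak forms for two candidate solutions, testing with $u_1 - u_2 \in W^{1,\beta+1}_0(\Om)$, and invoking the strict monotonicity of the vector field $\xi \mapsto a(|\xi|_g)\xi$ that follows from the convexity of $F$.

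Next, interior $C^{1,\gamma}_{loc}$-regularity is obtained by passing to local isothermal charts, in which the equation takes the planar form \eqref{def-weak-plane} with the smooth, bounded, strictly positive conformal factor $\lambda$. The structural conditions \eqref{a-harm-ass1} place the equation within the scope of the classical regularity theory for quasilinear elliptic equations with general growth (of Tolksdorf--Lieberman type), yielding a local H\"older estimate for $\nabla u$. Covering $\Om$ with such charts then gives $u \in C^{1,\gamma}_{loc}(\Om)$.

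Finally, continuity up to $\partial\Om$ is established by a barrier argument. At each $x_0 \in \partial\Om$, the interior and exterior ball conditions provide two geodesic balls tangent to $\partial\Om$ at $x_0$; in local isothermal coordinates around $x_0$, I construct radial local super- and subsolutions based at the centers of these balls, and perturb them by the $C^2$ extension afforded by $\phi$, so that they touch $\phi(x_0)$ from above and below at $x_0$. A comparison principle for \eqref{def-strong-aharm}, which follows from the strict monotonicity of $\xi \mapsto a(|\xi|_g)\xi$, then sandwiches $u$ and forces $u(x) \to \phi(x_0)$. The main obstacle is precisely this barrier step: depending on whether $\alpha<1$ or $\beta>1$ the operator may be degenerate or singular, and the radial barriers have to be tuned so that their gradients stay in a regime where \eqref{a-harm-ass1} gives ellipticity strong enough to verify the super-/sub-solution property; here the proof closely follows the Euclidean argument of \cite[Proposition 2.1]{alr}, with only the minor modifications needed to absorb the smooth bounded conformal factor $\lambda$.
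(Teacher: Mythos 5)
Your route is genuinely different from the paper's. The paper does not use the direct method at all: following \cite[Proposition 2.1]{alr}, it regularizes the operator by a family $a_\ep$ satisfying a uniform version of \eqref{a-harm-ass1} with $\inf a_\ep>0$, solves the nondegenerate Dirichlet problems by \cite[Theorem 12.5]{gt}, obtains uniform $C^{0}(\overline\Om)$ and $C^{1,\gamma}_{loc}$ bounds from \cite[Theorems 14.1 and 14.15]{gt} (after checking structure condition (14.9) and the eigenvalue-ratio bound coming from \eqref{a-harm-ass1}), and then passes to the limit by Ascoli--Arzel\`a. Uniqueness is by the comparison principle of Pucci--Serrin, which is morally the same as your strict-monotonicity argument, so that step is fine.

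There is, however, a genuine gap in your existence step. Integrating \eqref{a-harm-ass1} gives, for $s\geq 1$, $a(1)\,s^{\alpha-1}\le a(s)\le a(1)\,s^{\beta-1}$, hence $c_1 s^{\alpha+1}-c_2\le F(s)\le c_3 s^{\beta+1}+c_4$: the exponents in your displayed growth bounds are reversed, and the correct bounds reveal that when $\alpha<\beta$ the Lagrangian has nonstandard $(\alpha+1,\beta+1)$-growth. The functional $\cE$ is finite on $\phi+W^{1,\beta+1}_0(\Om)$ but is only coercive on $W^{1,\alpha+1}_0(\Om)$, so a minimizing sequence in $W^{1,\beta+1}$ need not stay bounded there, and the direct method as stated does not produce a minimizer in that space. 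Switching to $W^{1,\alpha+1}_0$ restores coercivity, but then one cannot in general differentiate the functional to obtain the Euler--Lagrange equation: $|a(|\nabla u|)\nabla u|=F'(|\nabla u|)\lesssim |\nabla u|^{\beta}$ may fail to be locally integrable when $\beta>\alpha+1$, and even when it is, the $(p,q)$-growth literature (Giaquinta, Marcellini) shows that minimizers of such functionals need not be weak solutions nor lie in $W^{1,\beta+1}_{loc}$. For $\alpha=\beta$ (e.g.\ $p$-harmonic) your argument is clean, but the lemma claims the result for all $0<\alpha\le\beta$, which is precisely why the paper avoids the variational approach and instead uses the regularization--a priori estimate--compactness scheme. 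Your interior regularity and barrier steps are plausible once existence is in hand, but they inherit the same caveat: the Tolksdorf--Lieberman estimates and the radial barriers must be applied to a genuine weak solution, not merely a minimizer.
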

 
\begin{proof}
The proof follows strictly the one in~\cite[Proposition 2.1]{alr} and therefore we will restrict our discussion only to the key differences in the surface setting.

Reasoning as in \cite[Lemma 3.6]{adve}, we can introduce a global isothermal coordinates chart $\phi:\Omega\to\R^2$. Accordingly, the problem ~\eqref{DP} can be reduced to the equation~\eqref{def-weak-plane} subject to a $C^2$ data, again denoted by $\phi$, in the plane.

Suppose that we know that a solution exists and is at least $C^1_{loc}(\Om)$ as proven in the further part of the discussion. Then the uniqueness follows from the comparison principle, see e.g. Theorem 2.4.1 and Proposition 2.4.3 in~\cite{puse-book} once we have checked that our $a$-harmonic equation satisfies the assumptions of that proposition. To this end, let $A(x, \xi):\Om\times \R^n\to \R^n$ be defined as follows $A(x, \xi):=a(\lambda^{-1}(x)|\mathbf \xi|_0) \mathbf \xi$. Then $A$ is continuous, since $\lambda>0$ and $a\in C^1(0,\infty)$. Next, we find the Jacobi matrix of $A$ with respect to $\xi$-variable:
\[
 D_{\xi}A(x,\xi)=\left[\delta_{ij}a(\lambda^{-1}(x)|\mathbf \xi|_0)+a'(\lambda^{-1}(x)|\mathbf \xi|_0)\frac{1}{\lambda}\frac{\xi_i\xi_j}{|\xi|_0}\right]_{ij},\qquad i,j=1,2.
\]
This together with the smoothness of $\lambda$ imply that $A\in C^1(\Om\times(\R^n\setminus\{0\}))$. Finally, we compute that 
\[
 \det D_{\xi}A(x,\xi)=a^2(\lambda^{-1}(x)|\mathbf \xi|_0)\left(1+\frac{a'(\lambda^{-1}(x)|\mathbf \xi|_0)\lambda^{-1}(x)|\mathbf \xi|_0}{a(\lambda^{-1}(x)|\mathbf \xi|_0)}\right)\geq \alpha a^2(\lambda^{-1}(x)|\mathbf \xi|_0)>0.
\]
Moreover, it holds that 
\[
 (D_{\xi}A(x,\xi))_{11}=a(\lambda^{-1}|\xi|_{0})\left[\left(\frac{a'(\lambda^{-1}|\xi|_{0})\lambda^{-1}|\xi|_{0}}{a(\lambda^{-1}|\xi|_{0})}+1\right)\frac{\xi_1^2}{|\xi|_0^2}+\frac{\xi_2^2}{|\xi|_0^2}\right]\!\!>a(\lambda^{-1}|\xi|_{0})\left[\frac{\alpha\xi_1^2+\xi_2^2}{|\xi|_0^2}\right]\!\!>0,\,\, \xi\not=0,
\]
and the similar estimate implies that $ (D_{\xi}A(x,\xi))_{22}>0$. All together, we conclude that matrices $D_{\xi}A(x,\xi)$ are positive definite on $\Om\times(\R^n\setminus\{0\})$. Therefore, Proposition 2.4.3 and Theorem 2.4.1 in~\cite{puse-book} can be applied to our $a$-harmonic operator.

 The existence and asserted regularity are proven by approximation of $a$ by regular functions $a_\epsilon$ for $\ep>0$ which satisfy a condition similar to~\eqref{a-harm-ass1} with slightly different bounds independent of $\epsilon$, i.e,
	\begin{equation}\tag{A$_\epsilon$}
		\qquad  0<\min\{\alpha;1\} \leq 1+\frac{a_{\ep}'(s)s}{a_{\ep}(s)}\leq \max\{\beta;1\},\quad \hbox{for all } s>0, \label{a-harm-ass1eps}
	\end{equation}
	 and such that $\inf a_\ep > c(\ep)>0$;  see pg. 197 in~\cite{alr} for details, in particular (2.2) and (2.3) therein. In a consequence we obtain a family of elliptic nondegenerate operators of corresponding Dirichlet problems
	  \begin{equation}\tag{DP$_\epsilon$}\label{DPeps}
	 	\begin{cases}
	 		\div(a_\epsilon(|\nabla u_\ep|_g)\nabla u_\ep)=0 & \hbox{in } \Om,\\
	 		u_\ep|_{\partial \Omega}=\phi,
	 	\end{cases}
	 \end{equation}
	and of associated solutions $u_\ep \in C^{2,\gamma}(\Omega)\cap C^0(\bar\Omega)$, see \cite[Theorem 12.5]{gt}. Instead of the family of equations (2.4a)' in~\cite{alr} we have  
\begin{equation}
 \Delta u_{\epsilon}+\lambda^{-2}\frac{a_\epsilon'(\lambda^{-1}|\nabla u_\epsilon|_0)}{a_\epsilon(\lambda^{-1}|\nabla u_\epsilon|_0)\lambda^{-1}|\nabla u_\epsilon|_0} \nabla u_\epsilon \nabla^2 u_\epsilon (\nabla u_\epsilon)^T -\frac{a_\epsilon'(\lambda^{-1}|\nabla u_\epsilon|_0)\,\lambda^{-1}\,|\nabla u_\epsilon|_0}{a_\epsilon(\lambda^{-1}|\nabla u_\epsilon|_0)}
 \left\langle \frac{\nabla \lambda}{\lambda}, \nabla u_\epsilon \right\rangle_0
 =0.\label{eq-eps}
\end{equation}
In our setting the resulting $a_\epsilon$ depend additionally on $z\in \Om$ through the presence of conformal factor $\lambda^{-1}$, cf.~\eqref{def-weak-plane}. However, since $\lambda>0$ is assumed to be bounded and smooth the discussion in~\cite{alr} stands true in our case as well. In order to apply~\cite[Theorem 12.5]{gt} we verify by direct computations that coefficients in ~\eqref{eq-eps} are defined and H\"older continuous. Moreover, the ratios of the eigenvalues of the coefficients matrix are uniformly bounded giving the uniform ellipticity. Indeed, upon denoting these eigenvalues by $\Lambda_1\leq\Lambda_2$, we find by~\eqref{a-harm-ass1eps} that
\begin{equation}\label{est-eigen-ratio}
1\leq \frac{\Lambda_2}{\Lambda_1}=\frac{2+\frac{a_\epsilon'(\lambda^{-1}|\nabla u_\epsilon|_0)\lambda^{-1}|\nabla u_\epsilon|_0}{a_\epsilon(\lambda^{-1}|\nabla u_\epsilon|_0)}+\left|\frac{a_\epsilon'(\lambda^{-1}|\nabla u_\epsilon|_0)\lambda^{-1}|\nabla u_\epsilon|_0}{a_\epsilon(\lambda^{-1}|\nabla u_\epsilon|_0)}\right|}{2+\frac{a_\epsilon'(\lambda^{-1}|\nabla u_\epsilon|_0)\lambda^{-1}|\nabla u_\epsilon|_0}{a_\epsilon(\lambda^{-1}|\nabla u_\epsilon|_0)}-\left|\frac{a_\epsilon'(\lambda^{-1}|\nabla u_\epsilon|_0)\lambda^{-1}|\nabla u_\epsilon|_0}{a_\epsilon(\lambda^{-1}|\nabla u_\epsilon|_0)}\right|}\leq \max\{\beta, \alpha^{-1} \}.
\end{equation}
Finally, the growth condition (iii) in~\cite[Theorem 12.5]{gt} on the first order term expression for us reads
\begin{equation}\label{est-gt125-iii}
\frac{|f|}{\Lambda_1}\leq \frac{\left|\frac{a_\epsilon'(\lambda^{-1}|\nabla u_\epsilon|_0)\lambda^{-1}|\nabla u_\epsilon|_0}{a_\epsilon(\lambda^{-1}|\nabla u_\epsilon|_0)}\right| |\nabla \lambda| \lambda^{-1}|\nabla u_\epsilon|_0}{\Lambda_1}\leq c(|\lambda|_{C^1(\Om)}, \min\{\alpha;1\},\max\{\beta;1\}) |\nabla u_\epsilon|_0.
\end{equation}
Therefore, \cite[Theorem 12.5]{gt} gives us the existence and desired regularity of $u_\ep$.
As in the proof of~\cite[Proposition 2.1]{alr} we may now apply \cite[Theorems 14.15 and 14.1]{gt} to get the uniform $C^0(\overline{\Om})$ and $C^{1,\gamma}_{loc}$  estimates for $u_\ep$. In particular \cite[Theorems 14.15]{gt} gives us the equicontinuity of $\{u_\ep\}_{\ep>0}$. Hence, the Ascoli--Arzel\`a theorem can be applied and we may conclude the assertion of the lemma. 

Applying~\cite[Theorems 14.15 and 14.1]{gt} reduces to checking that structure condition (14.9) in~\cite[Chapter 14.1]{gt} holds in our case. Namely, we need to verify that 
\begin{equation}\label{est-149}
|p|\Lambda_2+|f(x)|\leq \mu\mathcal{E}(x,p) \quad \hbox{for all }(x,p)\in\Om\times\R^n,
\end{equation}
 with $|p|\geq \mu$. Here $\mathcal{E}(x,p):=\sum_{i,j=1,2}{a_{\ep}}_{ij}(x,p)p_ip_j$ is the quadratic form defined by the coefficients matrix of $a_{\ep}$. By direct computations we find that
\begin{align*}
\mathcal{E}(x,p)&=\left(1+\lambda^{-1}\frac{a'_{\ep}(\lambda^{-1}|p|)}{a_{\ep}(\lambda^{-1}|p|)}\frac{p_1^2}{|p|}\right)p_1^2+2\lambda^{-1}\left(\frac{a'_{\ep}(\lambda^{-1}|p|)}{a_{\ep}(\lambda^{-1}|p|)}\frac{p_1p_2}{|p|}\right)p_1p_2+\lambda^{-1}\left(1+\frac{a'_{\ep}(\lambda^{-1}|p|)}{a_{\ep}(\lambda^{-1}|p|)}\frac{p_2^2}{|p|}\right)p_2^2\\
&=|p|^2+\lambda^{-1}\frac{a'_{\ep}(\lambda^{-1}|p|)}{a_{\ep}(\lambda^{-1}|p|)}|p|^3
\geq \min\{\alpha,1\}|p|^2.
\end{align*}
On the other hand, by the estimate of $|f|$ in~\eqref{est-gt125-iii} we have that $|p|\Lambda_2+|f|\leq |p|(\Lambda_2+c(|\lambda|_{C^1(\Om)}, \alpha,\beta))$ (here $c$ denotes a possibly different constant which also includes $\Lambda_1$). Therefore, by setting $\mu:=\sqrt{\frac{\Lambda_2+c(|\lambda|_{C^1(\Om)}, \alpha,\beta)}{\min\{\alpha,1\}}}$, we get that for $|p|\geq \mu$ condition~\eqref{est-149} holds true, justifying the use of~\cite[Theorems 14.15]{gt}. Hence the proof is completed.
\end{proof}

\begin{proof}[Proof of Proposition~\ref{lem-reg}]
	
First, suppose that $a$ is upper bounded. By the discussion in the proof of Lemma~\ref{lem-ALR} we have that the solution of the Dirichlet problem~\eqref{DP} for \eqref{def-weak-plane}, and hence for \eqref{def-weak-aharm}, is continuous in $\overline{\Om}$. Let now $U\Subset V \Subset \Om$ be smooth domains and consider the same $\epsilon$-regularization $a_\ep$ of $a$ as in the proof of the previous lemma. For such a family of operators we solve the Dirichlet problems~\eqref{DPeps} in $V$ subject to continuous boundary data $u|_{\partial V}$. Then, by the discussion on pg. 198 in~\cite{alr}
the sequence $(u_\epsilon)$ is uniformly bounded in $C_ {loc}^{1,\alpha}(V)$ and converges, up to a subsequence, in $C^1(V)\cap C^0(\bar V)$ to the unique solution $u$ to the Dirichlet problem on $V$.  
To each one of the $u_\ep$ we apply Theorem \ref{th:regularity} to deduce that $\|u_\ep\|_{W^{2,2}(U)}$ is bounded, independently of $\ep$. Hence, a subsequence of $(u_\ep)$ converges in 
$W^{2,2}(U)$ to a limit function $u_0\in W^{2,2}(U)$, and necessarily $u_0=u$. In particular $u\in W^{2,2}_{loc}$.

In order to prove the second assertion of the proposition, we first compute 
	\begin{align*}
	& \partial_x\left(a^{\delta}_{\epsilon}(\lambda^{-1}|\nabla {u_\epsilon}|_{0})u_x\right)=a_\ep^{\delta}(\lambda^{-1}|\nabla {u_\epsilon}|_{0})\Bigg( \left[\delta \cA_\ep \frac{{u_\epsilon}_x^2}{|\nabla u_\epsilon|^2_{0}}+1\right]{u_{\epsilon}}_{xx}+\delta \cA_\ep \frac{{u_\epsilon}_x{u_\ep}_y}{|\nabla u_\epsilon|^2_{0}}{u_{\epsilon}}_{xy}
-\delta \frac{\lambda_x}{\lambda} \cA_\ep {u_\epsilon}_x\Bigg), 
\\&\partial_x\left(a^{\delta}_{\epsilon}(\lambda^{-1}|\nabla u_\epsilon|_{0})u_y\right)=a_\ep^{\delta}(\lambda^{-1}|\nabla u_\epsilon|_{0})\Bigg( \left[\delta \cA_\ep\frac{{u_\epsilon}_y^2}{|\nabla u_\epsilon|_{0}}+1\right]{u_{\epsilon}}_{xy}+\delta 	\cA_\ep\frac{{u_\epsilon}_x{u_\ep}_y}{|\nabla u_\epsilon|^2_{0}}{u_\epsilon}_{xx}
-\delta\frac{\lambda_x}{\lambda} \cA_\ep {u_\epsilon}_y\Bigg), 
\end{align*}
where $\cA_\ep=\frac{a_\ep'(\lambda^{-1}|\nabla u_\epsilon|_{0})\lambda^{-1}|\nabla u_\epsilon|_{0}}{a_\ep(\lambda^{-1}|\nabla u_\epsilon|_{0})}$. Similar expressions hold for $\partial_y$. Now, $|\cA_\ep|$ is bounded independently of $\ep$, and for every $\ep$ small enough there exists a constant $C_a$ such that $0<a_\ep\le C_a$ on $(0,u^\ast]$, with $u^\ast:=\sup_\ep\|\nabla u_\ep\|_{L^\infty(V)}$. Hence, the $a_{\epsilon}^{\delta}(\lambda^{-1}|\nabla u_\epsilon|_{0})\nabla u_\epsilon$ are uniformly bounded in $W^{1,2}(U, \R^2)$ and thus one of its subsequences converges in $W^{1,2}(U, \R^2)$ to a vector field $X\in W^{1,2}(U, \R^2)$. 

In order to conclude the first part of the proof of~\eqref{a-delta-reg}, we \textsl{claim} that $a_{\epsilon}^{\delta}(\lambda^{-1}|\nabla u_\epsilon|_{0})\nabla u_\epsilon$ converges to $a^{\delta}(\lambda^{-1}|\nabla u|_{0})\nabla u$ point-wisely, so that
$X=a^{\delta}(\lambda^{-1}|\nabla u|_{0})\nabla u$.  Indeed
\begin{align*}
	\left|a_{\epsilon}^{\delta}(\lambda^{-1}|\nabla u_\epsilon|_{0})\nabla u_\epsilon-a^{\delta}(\lambda^{-1}|\nabla u|_{0})\nabla u\right|
	&\le 
	\left|a_{\epsilon}^{\delta}(\lambda^{-1}|\nabla u_\epsilon|_{0})-a^{\delta}(\lambda^{-1}|\nabla u_\epsilon|_{0})\right|\, \left|\nabla u_\epsilon\right|\\
	&+\left|a^{\delta}(\lambda^{-1}|\nabla u_\epsilon|_{0})\nabla u_\epsilon-a^{\delta}(\lambda^{-1}|\nabla u|_{0})\nabla u\right|.
\end{align*}
The second term on the right-hand side converges to $0$ since $\nabla u_\ep \to \nabla u$ uniformly on $U$ and $a^{\delta}(s)s=(a(s)s)^{\delta}s^{1-\delta}$ is continuous on $[0,\infty)$ by assumption \eqref{a-harm-ass2}. 
The first term on the right-hand side converges to $0$ since $a_\ep^\delta\to a_\ep$ in $C^1_{loc}(0,+\infty)$ by the construction in~\cite{alr}, while
$[a_\ep^{\delta}(s)-a^{\delta}(s)]\,s\le 2C_a^\delta t$ on $(0,t]$. 

Suppose now that $a$ is not upper bounded in a neighborhood of $0$. Then $1/a$ is.
By mimicking the stream function method for $p$-harmonic equation in the plane we will show that in such a case assertion of the lemma holds as well, see \cite[Chapter 16.1, Theorem 16.3.1]{aim} and~\cite{arli}. 

Let $U\Subset \Om$ be simply-connected. We define a function $v:U\to \R$ as a solution of the equation
\[
 \nabla v=\star (a(|\nabla u|_g)\nabla u).
\]
Such a solution exists, as the vector field $\star (a(|\nabla u|_g)\nabla u)$ is irrotational in the simply connected set $U$, hence conservative. In local coordinates this corresponds to the following system of PDEs:
\[
 \begin{cases}
 v_x=-a(\lambda^{-1}(z)|\nabla u(z)|_0) u_y\\
 v_y=a(\lambda^{-1}(z)|\nabla u(z)|_0) u_x. 
 \end{cases}
\]
From this, we get $|\nabla v|_0=a(\lambda^{-1}(z)|\nabla u(z)|_0)|\nabla u|_0$ and hence
\begin{equation}\label{eq-stream}
 |\nabla v|_g=a(|\nabla u|_g)|\nabla u|_g.
\end{equation}
Define function $F:(0,\infty)\to \R_+$ as follows $F(t):=a(t)t$. Since,
\[
 F'(t)=a\left(1+\frac{a'(t)t}{a}\right),
\]
we have that, by assumption~\eqref{a-harm-ass1}, $0<\alpha a(t)\leq F'(t)\leq \beta a(t)$ for all $t>0$. Therefore, the inverse of $F$ exists and $|\nabla u|_g=F^{-1}(|\nabla v|_g)$. Moreover, observe that by~\eqref{eq-stream} we have that $|\nabla u|_g=0$ if and only if $|\nabla v|_g=0$, and so the sets of critical points for $u$ and $v$ are the same. We directly check that $v$ satisfies the following equation:
\begin{equation}\label{eq-conj}
 \div\left(\frac{1}{a(F^{-1}(|\nabla v|_g))}\nabla v\right)=0.
\end{equation}
Define function $b$ as follows $b(t):=\frac{1}{a(F^{-1}(t))}$. It holds that $b(t)>0$ for $t>0$ and, moreover, $b$ satisfies assumption~\eqref{a-harm-ass1}. Indeed, it holds that
\[
b'(t)=-\frac{a'(F^{-1}(t))}{a(F^{-1}(t))^2} \frac{d}{ds}\left(\frac{1}{F(s)}\right)\bigg|_{s=F^{-1}(t)}=-\frac{a'(F^{-1}(t))}{a(F^{-1}(t))^3}\,\frac{1}{1+\frac{a'(s)s}{a(s)}}\bigg|_{s=F^{-1}(t)}.
\]
Henceforth, 
\[
 1+\frac{b'(t)t}{b(t)}=1-\frac{a'(s)s}{a(s)}\frac{1}{1+\frac{a'(s)s}{a(s)}}=\frac{1}{1+\frac{a'(s)s}{a(s)}}.
\]
Therefore, $b$ satisfies~\eqref{a-harm-ass1} with $\alpha'=\alpha^{-1}$ and $\beta'=\beta^{-1}$. Moreover, since by assumption $1/a$ is bounded in a neighbourhood of $0$, function $a>0$ and satisfies~\eqref{a-harm-ass2}, we have that $tb(t)\to 0$, as $t\to 0$, giving that $b$ satisfies~\eqref{a-harm-ass2}. Furthermore, since $a$ is unbounded in the neighbourhood of $0$, then $b$ is bounded and Lemma~\ref{lem-ALR} can be applied to $b$. In a consequence we get that $b^{1-\delta}(|\nabla v|_{g})\nabla v\in W^{1,2}_{loc}(\Om)$ for $v$ solving~\eqref{eq-conj}. However, in the local coordinates we have that
\[
 b^{1-\delta}(|\nabla v|_{g})\nabla v=\frac{1}{a^{1-\delta}(F^{-1}(|\nabla v|_g))}\star (a(|\nabla u|_g)\nabla u)=\left(-a^{\delta}(\lambda^{-1}(z)|\nabla u(z)|_0) u_y, a^{\delta}(\lambda^{-1}(z)|\nabla u(z)|_0) u_x\right).
\]
This implies that also $a^{\delta}(|\nabla u|_{g})\nabla u\in W^{1,2}_{loc}(\Om)$ even in case $a$ is lower bounded away from $0$ in a neighborhood of $0$, but not necessarily upper bounded. 
\end{proof}

\begin{ex} Let $a(s)=s^{p-2}$ for $1<p<\infty$, then $F^{-1}(t)=t^{\frac{1}{p-1}}$ and $a(F^{-1}(|\nabla v|_g))=|\nabla v|_g^{\frac{p-2}{p-1}}$. Therefore, the conjugate equation of the $p$-harmonic one is $q$-harmonic for $q=\frac{p}{p-1}$, as
	\[
	\div(|\nabla v|_g^{\frac{2-p}{p-1}}\nabla v) =  \div(|\nabla v|_g^{\frac{p}{p-1}-2}\nabla v) = \div(|\nabla v|_g^{q-2}\nabla v)=0.
	\]
Similarly, let  $a(s)=(1+s^2)^{-1/2}$, which corresponds to the minimal surface equation. Then $F^{-1}(t)=\sqrt{\frac{t^2}{1-t^2}}$ and 
	\[
	\div({b(|\nabla v|_g)}\nabla v  )=\div\left(\frac{1}{a(F^{-1}(|\nabla v|_g))}\nabla v  \right)=\div\left(\frac{1}{\sqrt{1-|\nabla v|_g^2}}\right),
	\]
i.e., the maximal graph equation in Lorentzian spacetime.
\end{ex}

\subsection{Complex representation of $a$-harmonic equation on surfaces}\label{Section3.1}

Let us now pass to finding the complex system of equations corresponding to \eqref{def-weak-aharm}. In order to complete this goal we will follow the standard approach, see e.g.~\cite{strzelecki} for the setting of $p$-harmonic functions on surfaces and~\cite[Section 3]{alr} for the setting of planar $a$-harmonic functions.

Recall that the complex gradient of $u$ can be defined in local coordinates as $f:=u_x-iu_y$ and the associated operator is
\begin{equation}\label{e:F}
 F(z):=a^{\frac12}\Big(\lambda(z)^{-1}|f(z)|\Big)f(z).
\end{equation}

Since, in the distributional sense, it holds that $u_{xy}=u_{yx}$ and $F\in W^{1,2}_{loc}$ by Proposition \ref{lem-reg}, we have that 
\begin{equation}\label{eq-mixed-der}
 \frac{\partial}{\partial y}\left(\frac{F+\barF}{a^{\frac12}(\lambda^{-1}|f|)}\right)=i\, \frac{\partial}{\partial x}\left(\frac{F-\barF}{a^{\frac12}(\lambda^{-1}|f|)}\right),
\end{equation}
in the sense of distributions. Recall that $\frac{\partial}{\partial z}:=\frac12(\frac{\partial}{\partial x}-i\,\frac{\partial}{\partial y})$ and $\frac{\partial}{\partial \bz}:=\frac12(\frac{\partial}{\partial x}+i\,\frac{\partial}{\partial y})$. Using this notation we rewrite~\eqref{eq-mixed-der} so that the following holds in the distributional sense:
\[
 \frac{\partial}{\partial \bz}\left(\frac{F}{a^{\frac12}(\lambda^{-1}|f|)}\right)= \frac{\partial}{\partial z}\left(\frac{\barF}{a^{\frac12}(\lambda^{-1}|f|)}\right).
\]
Equivalently this reads 
\begin{equation}\label{eq-1cmplx-1}
 F_{\bz}-\overline{F_{\bz}}=\frac{(a^{1/2})_{\bz}}{a^{1/2}}F-\frac{(a^{1/2})_{z}}{a^{1/2}}\barF.
\end{equation}
Next, we express the above equation in terms of $F$ and related expressions. Note that
\begin{equation}\label{eq-aA}
 \lambda^{-1}|F|=a^{\frac12}(\lambda^{-1}|f(z)|)\lambda^{-1}|f(z)|.
\end{equation}
As above we find that the inverse function of $A(t)=a^{\frac12}(t)t$ exists due to $a$ satisfying assumption \eqref{a-harm-ass1}, and thus $\lambda^{-1}|f(z)|=A^{-1}(\lambda^{-1}|F|)$. This implies that
\[
 a(\lambda^{-1}|f(z)|)=a(A^{-1}(\lambda^{-1}|F|)).
\]
Therefore, we may rewrite~\eqref{eq-1cmplx-1} as follows:
\begin{align}
 F_{\bz}-\overline{F_{\bz}}&= \frac{(a^{\frac12}(A^{-1}(\lambda^{-1}|F|))_{\bz}}{a^{\frac12}}\,F - \frac{(a^{\frac12}(A^{-1}(\lambda^{-1}|F|))_{z}}{a^{\frac12}}\,\overline{F} \nonumber \\
 &=\frac{1}{2a} \left(a'(A^{-1})\,\left[ A^{-1}\right]'\right)\Big|_{\lambda^{-1}|F|} \left\{\left((\lambda(z)^{-1}|F(z)|)\right)_{\bz}\,F-  \left((\lambda(z)^{-1}|F(z)|)\right)_{z}\,\overline{F}\right\} \nonumber \\
&=\frac{1}{4a} \left(a'(A^{-1})\,\left[ A^{-1}\right]'\right)\Big|_{\lambda^{-1}|F|}\lambda^{-1}|F|
\left\{F_{\bz}-\overline{F_{\bz}}+ \frac{F}{\bar{F}}\overline{F_z}-\frac{{\bar F}}{F}F_z-2\frac{F}{\lambda}\lambda_{\bz}+2\frac{\bar{F}}{\lambda}\lambda_{z}\right\}. \label{eq-1cmplx}
\end{align}
Upon denoting by $B:=\frac{1}{4a} \left(a'(A^{-1})\,\left[ A^{-1}\right]'\right)\big|_{\lambda^{-1}|F|}\lambda^{-1}|F|$, we solve the equation for $F_{\bz}-\overline{F_{\bz}}$ to get
\begin{equation}\label{eq-11cmplx}
 F_{\bz}-\overline{F_{\bz}}=\frac{B}{1-B}\left\{\frac{F}{\bar{F}}\overline{F_z}-\frac{{\bar F}}{F}F_z-\frac{F}{\lambda}\lambda_{\bz}+\frac{\bar{F}}{\lambda}\lambda_{z}\right\}.
\end{equation}

\begin{ex}
 If $a(s)=s^{p-2}$, then $A(s)=a^{\frac12}(s)s=s^{\frac{p}{2}}$ and by the direct calculations we find that $\frac{B}{1-B}=\frac{p-2}{p+2}$. Therefore, we retrieve the $p$-harmonic case in~\cite[Formula (2.6)]{strzelecki}.
\end{ex}

On the other hand the $a$-harmonic equation can be written as follows:
\[
 \frac{\partial}{\partial x}\left((F+\barF) a^{\frac12}(\lambda^{-1}|f|)\right)+i\, \frac{\partial}{\partial y}\left((F-\barF) a^{\frac12}(\lambda^{-1}|f|)\right)=0,
\]
which, using the complex derivative, reads:
\[
  \frac{\partial}{\partial \bz}\left(F a^{\frac12}(\lambda^{-1}|f|)\right) + \frac{\partial}{\partial z}\left( \barF a^{\frac12}(\lambda^{-1}|f|)\right)=0.
\]
By~\eqref{eq-aA} and the discussion following it, we arrive at the equation
\begin{equation}\label{eq-2cmplx}
  \frac{\partial}{\partial \bz}\left(F \frac{\lambda^{-1}|F|}{A^{-1}(\lambda^{-1}|F|)}\right) + \frac{\partial}{\partial z}\left(\barF \frac{\lambda^{-1}|F|}{A^{-1}(\lambda^{-1}|F|)}\right)=0.
\end{equation}

Upon direct differentiation equation~\eqref{eq-2cmplx} becomes
\begin{align*}
 0&=(F_{\bz}+\overline{F_{\bz}})A^{-1}(\lambda^{-1}|F|)\lambda^{-1}|F|\\
 &\qquad+A^{-1}(\lambda^{-1}|F|)\left[\left((\lambda(z)^{-1}|F(z)|)\right)_{\bz}\,F+ \left((\lambda(z)^{-1}|F(z)|)\right)_{z}\,\overline{F}\right]\\
&\qquad -(A^{-1})'(\lambda^{-1}|F|)\lambda^{-1}|F|\left[\left((\lambda(z)^{-1}|F(z)|)\right)_{\bz}\,F+  \left((\lambda(z)^{-1}|F(z)|)\right)_{z}\,\overline{F}\right]\\
&=(F_{\bz}+\overline{F_{\bz}})A^{-1}(\lambda^{-1}|F|)\lambda^{-1}|F|\\
&\qquad +\frac12\left[A^{-1}(\lambda^{-1}|F|)-(A^{-1})'(\lambda^{-1}|F|)\lambda^{-1}|F|\right]\lambda^{-1}|F|\left\{F_{\bz}+\overline{F_{\bz}}+ \frac{F}{\bar{F}}\overline{F_z}+\frac{{\bar F}}{F}F_z-2\frac{F}{\lambda}\lambda_{\bz}-2\frac{\bar{F}}{\lambda}\lambda_{z}\right\}.
\end{align*}
Similarly to ~\eqref{eq-11cmplx}, we solve the last equation for $F_{\bz}+\overline{F_{\bz}}$ and arrive at the following one
\begin{equation}\label{eq-22cmplx}
F_{\bz}+\overline{F_{\bz}}=C\left\{\frac{F}{\bar{F}}\overline{F_z}+\frac{{\bar F}}{F}F_z-2\frac{F}{\lambda}\lambda_{\bz}-2\frac{\bar{F}}{\lambda}\lambda_{z}\right\},
\end{equation}
where
\[
 C:=\frac{(A^{-1})'(\lambda^{-\frac12}|F|)\lambda^{-\frac12}|F|-A^{-1}(\lambda^{-\frac12}|F|)}{3A^{-1}(\lambda^{-\frac12}|F|)-(A^{-1})'(\lambda^{-\frac12}|F|)\lambda^{-\frac12}|F|}.
\]
We add up ~\eqref{eq-11cmplx} and~\eqref{eq-22cmplx} to obtain the following  equation:
\begin{equation}\label{eq-final-cmplx}
 F_{\bz}-a_1F_z-a_2\overline{F_z}=-2a_1\overline{F}\frac{\lambda_{z}}{\lambda}-2a_2F\frac{\lambda_{\bz}}{\lambda},
\end{equation}
with $a_1:=\frac12(C-\frac{B}{1-B})\frac{\overline{F}}{F}$ and $a_2:=\frac12(C+\frac{B}{1-B})\frac{F}{\overline{F}}$.
It remains to prove that
\begin{equation}\label{eliptic-cond}
\|a_1\|_{L^{\infty}(\Om)}+\|a_2\|_{L^{\infty}(\Om)}<1,
\end{equation}
which implies the uniform ellipticity of~\eqref{eq-final-cmplx}. 

First, let $A(s)=a^{\frac12}(s)s$ and notice that $(A^{-1}(t))'=\frac{1}{A(s)'}$ at $s=A^{-1}(t)$, which gives that
\[
(A^{-1}(t))'=\frac{1}{a^{\frac12}(s)} \frac{1}{\frac12\frac{a'(s)s}{a(s)}+1}.
\]
Hence ($s=A^{-1}(t)$)
\begin{equation}\label{form-A-inv}
	\frac{(A^{-1}(t))'t}{A^{-1}(t)}=\frac{1}{a^{\frac12}(s)} \frac{1}{\frac12\frac{a'(s)s}{a(s)}+1}\frac{A(s)}{s}=\frac{1}{\frac12\frac{a'(s)s}{a(s)}+1}.
\end{equation}
Setting $D=D(s)=\frac{a'(s)s}{a(s)}$, we have 
$
\frac{(A^{-1}(t))'t}{A^{-1}(t)}=\frac{1}{\frac12D+1}$, from which
\begin{align*}
	B&=\frac14 \frac{a'(A^{-1})}{a(A^{-1})}A^{-1}\,\frac{1}{\frac12\frac{a'(s)s}{a(s)}+1} = \frac 14 D \frac{1}{\frac{1}{2}D+1}=\frac 12 \frac{D}{D+2}.
\end{align*}
Moreover, from \eqref{form-A-inv},
$$C=\frac{\frac{(A^{-1}(t))'t}{A^{-1}(t)}-1}{3-\frac{(A^{-1}(t))'t}{A^{-1}(t)}}=\frac{\frac{1}{\frac12D+1}-1}{3-\frac{1}{\frac12D+1}}=\frac{-D}{3D+4}.
$$
Hence,
\[
\frac{B}{1-B}=\frac{D}{D+4},\quad
C+\frac{B}{1-B}=\frac{2D^2}{(3D+4)(D+4)},\quad
C-\frac{B}{1-B}=-4D\frac{D+2}{(3D+4)(D+4)}.
\]
Note that $C+B/(1-B)>0$ as, 
by assumption \eqref{a-harm-ass1}, $-1<\alpha-1\le D \le \beta-1$, while $C-B/(1-B)>0$ if and only if $D<0$. In particular
\[
\left|C+\frac{B}{1-B}\right|+\left|C-\frac{B}{1-B}\right| = \frac{2D^2}{(3D+4)(D+4)} -4D\frac{D+2}{(3D+4)(D+4)}=\frac{-8D-2D^2}{(3D+4)(D+4)}<2  
\]
if $-1<D<0$, while
\[
\left|C+\frac{B}{1-B}\right|+\left|C-\frac{B}{1-B}\right| = \frac{2D^2}{(3D+4)(D+4)} +4D\frac{D+2}{(3D+4)(D+4)}=\frac{6D^2+8D}{(3D+4)(D+4)}<2
\]
if $D\ge 0$. Thus, under the  growth condition~\eqref{a-harm-ass1}, the inequality~\eqref{eliptic-cond} is proved and the uniform ellipticity of~\eqref{eq-final-cmplx} follows.

\begin{rem}
		Since $a^\delta(\lambda(z)^{-1}|f(z)|)f$ is in $W^{1,2}_{loc}$ for any $\delta\in[0,1]$, we can repeat the argument above for different values of the exponent. For instance, take $\delta=1$ and define $G(z):=a\left(\lambda(z)^{-1}|f(z)|\right)f(z)$. Then \eqref{eq-1cmplx-1} reads

\begin{equation}
 G_{\bz}-\overline{G_{\bz}}=\frac{a_{\bz}}{a}G-\frac{a_{z}}{a}\barG.
\end{equation}
We define $A(t):=a(t)t$ and repeat computations as in \eqref{eq-1cmplx}. The equation corresponding to~\eqref{eq-11cmplx} takes the following form:
\[
G_{\bz}-\overline{G_{\bz}}=\frac{B'}{1-B'}\left\{\frac{G}{\bar{G}}\overline{G_z}-\frac{{\bar G}}{G}G_z-\frac{G}{\lambda}\lambda_{\bz}+\frac{\bar{G}}{\lambda}\lambda_{z}\right\},
\]
where
\[
B':=\frac{1}{2a(A^{-1}(w))} \left(a'(A^{-1})\,\left[ A^{-1}\right]'\right)\big|_{w}w,\quad w:=\lambda^{-1}|G|.
\]
Furthermore, we find that
\[
(A^{-1}(w))'=\frac{1}{(a(s)s)'}=\frac{1}{a(s)} \frac{1}{1+\frac{a'(s)s}{a(s)}},\qquad \frac{(A^{-1}(w))'w}{(A^{-1}(w))}=\frac{1}{1+\frac{a'(s)s}{a(s)}}.
\]
From this $B'=\frac{1}{2}D\frac{1}{D+1}$, where $D$ as above. Hence
\[
\left|\frac{B'}{1-B'}\right|=\left|\frac{D}{D+2}\right|\leq \frac{\min\{|\alpha-1|, |\beta-1|\}}{\alpha+1}.
\]
This is all that we need, because now \eqref{eq-22cmplx} reads $G_{\bz}+\overline{G_{\bz}}=0$ and $C=0$. Therefore, the counterpart of~\eqref{eq-final-cmplx} reads:
\[
 G_{\bz}-a_1G_z-a_2\overline{F_z}=-a_1\overline{G}\frac{\lambda_{z}}{\lambda}-a_2G\frac{\lambda_{\bz}}{\lambda}.
\]
with $a_1=-\frac{B'}{2(1-B')}$ and $a_2=-\overline{a_1}$. One directly checks that the ellipticity condition $|\frac{B'}{1-B'}|<1$ for $a=s^{p-2}$ reads $|\frac{B'}{1-B'}|=\frac{|p-2|}{p}<1$ which is exactly the formula after (2.9) on pg. 6 in ~\cite{strzelecki} for $a=p-2$. 
	\end{rem}

Notice, that on the contrary to the planar case (i.e. $\lambda=const$), we now cannot conclude that $F$ is a quasiregular map. Nevertheless, by the representation theorem on pg. 259 in \cite{bjs} we may write 
\begin{equation}\label{F-sol}
F(z)=\phi(w(z))\exp{\psi},
\end{equation}
where $\phi$ is holomorphic, $w$ is a H\"older continuous homeomorphism in $W^{1,2+\epsilon}_{loc}$  for some $\epsilon>0$ and $\psi$ is H\"older continuous. The Sobolev regularity of $w$ follows from the Gehring's lemma on higher integrability of quasiconformal mappings, see the proof of the representation formula on pg. 260 in \cite{bjs}. In particular, zeros of $F$ are governed by zeros of $\phi$, and hence by complex analysis critical points of $a$-harmonic functions $u$ are isolated and form a discrete set.

Moreover, by the discussion in~\cite[Chapter 6.4]{bjs}, the representation formula~\eqref{F-sol} implies the unique continuation property on smooth surfaces for $a$-harmonic equations in subject, in particular for the $p$-harmonic equation. This result is well known in the plane. 
In the $p$-harmonic case, the Riemannian counterpart is a direct consequence of~\cite{strzelecki}. However, according to our best knowledge, the general Riemannian result has not been observed in the literature so far, and therefore, we formulate it here below.

\begin{prop}\label{prop:uni-cont}
 Let $\Om\subset M$ be a bounded connected open set and assume that $\partial \Om$ satisfies the interior and exterior ball condition. Let further $\phi\in C^2(\overline{\Om})$ and $u$ be the unique weak solution of the following Dirichlet problem:
\begin{equation*}
 \begin{cases}
 \div(a(|\nabla u|_g)\nabla u)=0 & \hbox{in } \Om,\\
 u|_{\partial \Om}=\phi.
 \end{cases}
\end{equation*}
Then, $u$ satisfies the unique continuation property, provided that function $a$ satisfies conditions~\eqref{a-harm-ass1}, \eqref{a-harm-ass2} and~\eqref{a-harm-ass3}.
\end{prop}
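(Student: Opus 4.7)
The plan is to leverage the complex representation formula~\eqref{F-sol} derived in the preceding discussion. The key point is that for the $a$-harmonic function $u$, the complex gradient $f = u_x - iu_y$ and the auxiliary quantity $F = a^{1/2}(\lambda^{-1}|f|)f$ satisfy, in each isothermal chart of $\Omega$, the uniformly elliptic first-order system~\eqref{eq-final-cmplx} subject to~\eqref{eliptic-cond}, so that the Bers--John--Schechter factorization $F = \phi(w)\exp\psi$ from~\cite{bjs} applies. Since $\exp\psi$ is a nowhere-vanishing continuous function and $w$ is a homeomorphism, the zero set of $F$ corresponds, under $w$, to the zero set of the holomorphic function $\phi$.

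Using this, I would argue as follows. Suppose by contradiction that $u \equiv c$ on some nonempty open set $U \subset \Omega$; then $\nabla u \equiv 0$ on $U$, hence $f \equiv 0$ and consequently $F \equiv 0$ on $U$. Fix a connected isothermal chart $V \subset \Omega$ intersecting $U$. On $V$ we have $F = \phi(w)\exp\psi$, so $\phi \equiv 0$ on the open set $w(U \cap V)$, and the identity principle for holomorphic functions forces $\phi \equiv 0$ on the connected set $w(V)$. Hence $F \equiv 0$ on $V$, which back-translates into $\nabla u \equiv 0$ on $V$, and thus $u \equiv c$ on $V$.

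A standard connectedness argument then closes the proof. The set $\Sigma := \{x \in \Omega : u \equiv c \hbox{ in a neighborhood of } x\}$ is nonempty (it contains $U$), open by construction, and closed in $\Omega$ by continuity of $u$ together with the chart-wise argument just given. Since $\Omega$ is connected, $\Sigma = \Omega$ and $u \equiv c$ on all of $\Omega$. The main technical obstacle is to verify that the regularity hypotheses required by the Bers--John--Schechter representation theorem are met in our Riemannian setting, namely the $W^{1,2}_{loc}$-regularity of $F$ supplied by Proposition~\ref{lem-reg}, the uniform ellipticity bound~\eqref{eliptic-cond} already derived, and the smoothness of the conformal factor $\lambda$ which controls the lower-order terms on the right-hand side of~\eqref{eq-final-cmplx}. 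This is precisely where assumption~\eqref{a-harm-ass3} plays its role, entering through the regularity of $F$ furnished by Proposition~\ref{lem-reg}.
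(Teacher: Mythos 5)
Your argument is correct and follows exactly the path the paper indicates: the paper's proof is essentially to invoke the representation formula~\eqref{F-sol} together with the discussion in~\cite[Chapter~6.4]{bjs}, and you have simply spelled out the details — the identity principle for the holomorphic factor $\phi$, the back-translation to $F\equiv 0$ (hence $\nabla u\equiv 0$) on a chart, and the open-closed connectedness argument to propagate the conclusion across $\Omega$. The one small point worth making explicit is that $\nabla u\equiv 0$ implies $F\equiv 0$ because $a^{1/2}(s)s\to 0$ as $s\to 0^+$ by~\eqref{a-harm-ass2}, so $F$ is well defined and vanishes at critical points.
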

Notice that if $u$ in the above proposition is a priori $C^2$, then we only need condition~\eqref{a-harm-ass1} to hold.

Finally, we are in a position to formulate and prove a key observation allowing us to study the isoperimetric inequality, namely that the gradient of an $a$-harmonic function in subject does not vanish. Therefore, we  formulate this observation as a separate result, see Lemma~\ref{non-zero-lemma} below. This lemma generalizes similar observation for harmonic functions on surfaces, see~\cite[Lemma 2.9]{adve}, for $a$-harmonic functions in the plane, see~\cite[Theorem 2.1]{al2} and also~\cite[Section 2]{adve} for further references. The proof of Lemma~\ref{non-zero-lemma}  is strictly following its harmonic counterpart in~\cite[Lemma 2.9]{adve}. Nevertheless, for the convenience of readers we recall the full proof, addressing the $a$-harmonic modifications.

\begin{lem}\label{non-zero-lemma}
Let $\Omega$ be a $C^{1,\alpha}$-topological annular domain in a $2$-di\-men\-sional Riemannian manifold $(M^2, g)$.
Let $t_1,t_2\in \R$ be such that $t_1<t_2$ and let us consider a continuous up to the boundary $a$-harmonic solution $u$ of the Dirichlet problem~\eqref{DP} in $\Om$ under assumptions~\eqref{a-harm-ass1}, \eqref{a-harm-ass2} and \eqref{a-harm-ass3}.
Then, it holds that $\nabla u\not=0$ on $\Omega$.
\end{lem}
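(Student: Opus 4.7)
The plan is to combine three ingredients: the strong maximum and minimum principles together with the Hopf boundary point lemma to rule out critical points on $\partial\Omega$; the complex representation \eqref{F-sol} from Section~\ref{Section3.1} to obtain the local structure at isolated interior critical points; and a Poincar\'e--Hopf index count on the topological annulus, in the spirit of \cite[Lemma~2.9]{adve} for the harmonic case.

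First I would establish boundary non-vanishing. Assumptions \eqref{a-harm-ass1}--\eqref{a-harm-ass2} allow applying the strong maximum and minimum principles \cite[Theorem~8.5.1]{puse-book}, so that $t_1 < u < t_2$ in $\Omega$. In a neighborhood of $\partial\Omega$, the linearized operator \eqref{lapl-mthm} is uniformly elliptic with eigenvalue ratio bounded as in \eqref{est-eigen-ratio} (applied to $a$ itself in place of $a_\epsilon$), and the $C^{1,\alpha}$-regularity of $\Gamma_1$ and $\Gamma_2$ supplies the interior ball condition. The Hopf boundary point lemma \cite[Theorem~2.8.3]{puse-book} then gives $|\nabla u| > 0$ on $\Gamma_1 \cup \Gamma_2$; more precisely $\nabla u$ points strictly into $\Omega$ along $\Gamma_1$, strictly out of $\Omega$ along $\Gamma_2$, and is transverse to $\partial\Omega$.

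Next I would analyse interior critical points through the complex representation. In isothermal coordinates set $f := u_x - i u_y$ and $F := a^{1/2}(\lambda^{-1}|f|)\,f$. By Proposition~\ref{lem-reg} and the computations of Section~\ref{Section3.1}, $F \in W^{1,2}_{\rm loc}$ solves the uniformly elliptic complex first order system \eqref{eq-final-cmplx}, and \eqref{F-sol} yields $F(z) = \phi(w(z))\, e^{\psi(z)}$ with $\phi$ holomorphic, $w$ an orientation-preserving H\"older homeomorphism in $W^{1,2+\epsilon}_{\rm loc}$, and $\psi$ H\"older continuous. Since $a > 0$ and $e^\psi$ never vanishes, critical points of $u$ are in bijection, via $w$, with zeros of $\phi$, hence are isolated, and each such $z_0$ carries a well-defined multiplicity $k \ge 1$ equal to the order of the corresponding zero of $\phi$. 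By the topological invariance of the vector field index under orientation-preserving homeomorphisms, the index of $\nabla u$ at $z_0$ equals that of $\nabla\mathrm{Re}(\zeta^{k+1})$ at $\zeta = 0$, which a direct polar computation shows to be $-k \le -1$.

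At this point the conclusion would follow from a Poincar\'e--Hopf index count. Because $\nabla u$ is nowhere zero on $\partial\Omega$, transverse, and points inward on $\Gamma_1$ and outward on $\Gamma_2$, Poincar\'e--Hopf for a manifold with boundary yields
\begin{equation*}
\sum_{z_0 \in \Omega :\; \nabla u(z_0) = 0} \mathrm{ind}(\nabla u, z_0) \;=\; \chi(\Omega,\Gamma_1) \;=\; \chi(\Omega) - \chi(\Gamma_1) \;=\; 0 - 0 \;=\; 0,
\end{equation*}
using $\chi(\text{annulus}) = \chi(S^1) = 0$. Since each summand is a strictly negative integer, the total can vanish only when the critical set is empty, giving $\nabla u \ne 0$ throughout $\Omega$. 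I expect the principal technical obstacle to be in the preceding paragraph: rigorously identifying the algebraic order $k$ of $\phi$ at $w(z_0)$ with the gradient-field index $-k$ of $\nabla u$ at $z_0$ in the original $z$-coordinates. The Euclidean shortcut via a polynomial expansion of $u$ at $z_0$ is unavailable here because $w$ is only H\"older a priori, so one must rely on the topological invariance of the vector field index under homeomorphisms to transport the computation to the holomorphic model in $w$-coordinates. This is precisely the device used by Lewis \cite{lew3} in the planar $p$-harmonic setting, adapted to the present Riemannian surface framework.
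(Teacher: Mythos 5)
Your proposal is correct in spirit, but it follows a genuinely different route from the paper's own proof, and one step deserves a closer look. The paper does not use the Hopf lemma or any index count at all: instead, assuming a critical point $x_0\in\Om$, it studies the level set $\gamma=\{u=u(x_0)\}$, uses the representation \eqref{F-sol} only to conclude that $\gamma$ branches into at least two arcs at $x_0$, shows that none of these arcs can reach $\partial\Om$ (by the strong maximum principle, since that would force $u(x_0)\in\{t_1,t_2\}$) nor terminate inside $\Om$, so that $\gamma$ contains at least two disjoint simple closed curves, one of which bounds a subdomain $\Om'\Subset\Om$; then $u\equiv\mathrm{const}$ on $\Om'$ by the maximum principle, and the unique continuation property (Proposition~\ref{prop:uni-cont}) forces $u\equiv\mathrm{const}$ on $\Om$, contradicting $t_1<t_2$. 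This avoids entirely both the Hopf lemma and the computation of vector-field indices. Your Poincar\'e--Hopf argument is a classical alternative (closer in spirit to Alessandrini's planar treatment than to the structure of the paper's proof), and the index count $\sum\mathrm{ind}(\nabla u,z_0)=\chi(\Om)-\chi(\Gamma_1)=0$ together with $\mathrm{ind}\le -1$ at each critical point is sound, once the index identification you flag is established; the transport of the index via \eqref{F-sol} does work, since $e^\psi$ is non-vanishing and $w$ is an orientation-preserving homeomorphism of local degree $+1$, so $\deg_{z_0}F=\deg_{w(z_0)}\phi=k$ and hence $\mathrm{ind}(\nabla u,z_0)=-k$.

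The one step that is more delicate than you acknowledge is the boundary non-vanishing via the Hopf lemma. In the paper, \cite[Theorem~2.8.3]{puse-book} is invoked for the \emph{non-divergence} form \eqref{lapl-mthm}, which only makes sense (and is uniformly elliptic with the eigenvalue bound \eqref{est-eigen-ratio}) after one already knows $\nabla u\ne 0$ and $u$ is smooth; indeed the paper applies the Hopf lemma in the proof of Theorem~\ref{thm-main1} \emph{after} Lemma~\ref{non-zero-lemma}, precisely for this reason. Used the way you propose, it would be circular. To make your argument self-contained you would need a boundary point lemma for the divergence-form degenerate quasilinear equation \eqref{def-strong-aharm} that does not presuppose nondegeneracy — a V\'azquez-type result, or the divergence-structure boundary point lemmas in \cite{puse-book} under conditions (A), (A$'$). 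Alternatively you could dispense with the Hopf lemma altogether by arguing, as the paper does, that no critical point can lie in a collar of $\partial\Om$ (else the strong maximum principle is violated), which gives compactness of the critical set and makes the index sum well-defined. With one of these substitutions, your proof goes through and constitutes a valid, index-theoretic alternative to the paper's level-curve/unique-continuation argument.
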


\begin{proof}
 In order to show the assertion of the lemma let us suppose on the contrary that there exists $x_0\in \Om$ such that $\nabla u(x_0)=0$. Consider the corresponding level curve $\gamma=\{x\in \Om: u(x)=u(x_0)\}$. 
 
 \emph{Claim:} There exists at least two simple closed curves $\gamma'_i\subset \gamma$, $i=1,2$.
 
 \emph{Proof of the claim:} 
We introduce isothermal coordinates $(x,y)$ induced by a conformal chart $\phi:\Omega\to\R^2$. The existence of such global isothermal coordinate systems on annular domain can be justified as in \cite[Lemma 3.6]{adve}. Since the $a$-harmonic equation ~\eqref{def-weak-aharm} in such coordinates has the form~\eqref{def-weak-plane} we associate with it the first order complex equation~\eqref{eq-final-cmplx} satisfied by the complex function $F$ defined in~\eqref{e:F}. Under assumptions~\eqref{a-harm-ass1}, \eqref{a-harm-ass2} and \eqref{a-harm-ass3}, $F$ is in $W^{1,2}_{loc}$ due to Proposition \ref{lem-reg}, while~\eqref{eq-final-cmplx} turns out to be uniformly elliptic  under condition~\eqref{a-harm-ass1}. Since $\phi(x_0)$ remains a critical point for $u\circ \phi^{-1}$, we see  by the form of solution~\eqref{F-sol} and by the theory of planar holomorphic functions that the level curve in the neighbourhood of $\phi(x_0)$ forms a finite family consisting of at least two arcs intersecting at $\phi(x_0)$.
  Thus, for points on the level curve $\gamma$ we obtain, via $\phi^{-1}$, that there are at least two curves passing through $x_0$ contained in $\gamma$. If any of those branches would intersect $\partial \Om$, then by the assumption of continuity of $u$ up to the boundary, it would hold that $u(x_0)=t_1$ (or $u(x_0)=t_2$), hence the maximum of $u$ (or, respectively, minimum of $u$) would be attained in the interior of $\Om$, forcing $u=const$ by the strong maximum (respectively, minimum) principle, see~\cite[Theorem 8.5.1]{puse-book}, whose assumption (A2) is implied by our assumptions~\eqref{a-harm-ass1} and~\eqref{a-harm-ass2} (cf. the paragraph in Introduction following formulations of those two assumptions). This is impossible, since $t_1\not =t_2$.
 
 Next, we rule out the possibility that the level curve $\gamma$ terminates at a point inside $\Om$. Indeed, suppose that there exists $y_0\in \gamma \cap \Om$, where $\gamma$ terminates, and consider two cases.

If $\nabla u(y_0)\not=0$, then the implicit function theorem implies that $\gamma$ can not terminate inside $\Om$, since it must be at least $C^1$ in a neighbourhood of $y_0$.

If $\nabla u(y_0)=0$, then by the discussion above, $\gamma$ would branch at $y_0$, contradicting assumption that it terminates there. 

To summarize, since $\gamma$ does not intersect $\partial \Om$ and does not terminate in $\Om$, it must contain at least two simple closed curves, denoted $\gamma'_i$, $i=1,2$, obtained by gluing regular curves (contained in $\gamma$). This ends the proof of the claim.

Since none of the curves $\gamma'_1$ and $\gamma'_2$ touches the boundary of $\Omega$, a topological argument together with the maximum principle allow us to infer that at least one of them bounds a domain $\Om'\Subset \Om$. 
Therefore, $u$ is constant in $\Om'$, and thus by the unique continuation property, cf. Proposition~\ref{prop:uni-cont}, $u$ is constant in $\Om$, contradicting that $t_1<t_2$. Hence, we conclude that $\nabla u\not=0$ in $\Om$. 
\end{proof}

\begin{ex}
 We present a class of Riccati-type equations which are covered by the above discussion and, hence, have their sets of critical points isolated and discrete.
 
Let us consider the solution on the surface $M$ of the following equation with function $a$  as above and a real-valued function $b$ defined on a domain $\Om\subset M$ such that $b\in L^{\infty}(\Om)$:
\[
 \div(a(|\nabla u|_g)\nabla u)=b(x)|\nabla u|_g^q,\quad 1\leq q<\infty.
\]
Observe that \eqref{eq-11cmplx} holds for this equation as well, since it corresponds to equality of mixed second order derivatives of $u$ in the distributional sense. Furthermore, by direct computations we obtain that \eqref{eq-22cmplx} holds with addtional expression
\[
 \gamma=\frac{2b(z) [A^{-1}(\lambda^{-\frac12}|F|]^q)}{3A^{-1}(\lambda^{-\frac12}|F|)\lambda^{-\frac12}|F|-(A^{-1})'(\lambda^{-\frac12}|F|)\lambda^{-1}|F|^2}.
\]
In a consequence~\eqref{eq-final-cmplx} takes the following modified form 

\begin{equation*}
 F_{\bz}-a_1F_z-a_2\overline{F_z}=-a_1\overline{F}\frac{\lambda_{z}}{\lambda}-a_2F\frac{\lambda_{\bz}}{\lambda}+\frac{\gamma}{2}.
\end{equation*}

In order to apply the representation theorem on pg. 259 in~\cite{bjs} we need to have bounded $\gamma=\gamma(z)$, that is independent of $F$, cf. formulas (8) and (9) on pg. 257 in~\cite{bjs}. If $a(s)=s^{p-2}$, then by computations similar to the one in Example 3 above, we have that for $A(s)=a^{\frac12}(s)s$ the following holds
\begin{align*}
 \gamma&=2b(z)\frac{[A^{-1}(\lambda^{-\frac12}|F|]^{q-1})}{\lambda^{-\frac12}|F|\left(3-\frac{(A^{-1})'(\lambda^{-\frac12}|F|)\lambda^{-\frac12}|F|}{A^{-1}(\lambda^{-\frac12}|F|)}\right)}=  2b(z)\frac{s^{q-1}}{a^{\frac12} (\lambda^{-\frac12}|f|)\lambda^{-\frac12}|f|\left(3-\frac{1}{\frac{1}{2}\frac{a'(s)s}{a(s)}+1}\right)} \\
 &=\frac{2p}{3p-2}b(z)s^{q-1-\frac{p}{2}}=\frac{2p}{3p-2}b(z),
\end{align*}
provided that $q=1+\frac{p}{2}$. Since $b$ is bounded, then so is $\gamma$ and we can apply the above discussion also to equations
\[
\div(|\nabla u|_g^{p-2}\nabla u)=b(x)|\nabla u|^{1+\frac{p}{2}}_g.
\]
The above computations open possibility to establish the no-critical points lemma in the setting of Riccati-type equations on surfaces. Moreover, the isoperimetric inequalities for such equations can also be investigated upon establishing formulas for $L'$ and $L''$. However, we leave this task to the future projects.
\end{ex}

\section*{Appendix A}

In this Appendix we adapt to our setting the regularity theory due to Talenti \cite{ta}, see also \cite{mawe}. Unlike the original approach, we restrict to the two dimensional case, which permits us to write a quite short and almost self-contained proof.  

\begin{theorem}\label{th:regularity}
	Let $v\in C^2(\Omega)$ be a solution to 
	\[
	\div(a(|\nabla v|_g)\nabla u)=0
	\]	
	in a domain $\Omega\subset M^2$, where $a\in C^1(0,\infty)$  is a positive function satisfying \eqref{a-harm-ass1} and $\inf a(s)>0$. 
 Then, for all $C^0$-smooth 
domains $U\Subset V\Subset \Omega$ it holds
	\[
	\|v\|_{W^{2,2}(U)}\le C,
	\]
	for some constant $C>0$ which depends on $U,V,\|v\|_{W^{1,2}(V)}$, constants $\alpha,\beta$ in condition~\eqref{a-harm-ass1} and $\|\lambda\|_{C^1(V)}$, the norm of the conformal factor given by the isothermal coordinates on $M^2$.
\end{theorem}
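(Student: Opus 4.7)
The plan is to adapt Talenti's energy-based approach by exploiting the two-dimensional structure. First I would pass to isothermal coordinates on a neighborhood of $V$, so that the equation becomes $\partial_i(\tilde a\, v_{x_i})=0$ with $\tilde a:=a(\lambda^{-1}|\nabla v|_0)$, as in \eqref{def-weak-plane}. Since $v\in C^2$, this holds classically, and expanding the divergence one obtains the non-divergence form $\tilde A^{ij}(x)v_{x_ix_j}=G(x)$, where the symmetric matrix $\tilde A^{ij}$ has eigenvalues $\tilde a$ and $\tilde a(1+a'(s)s/a(s))$ with $s=\lambda^{-1}|\nabla v|_0$, while $G$ is a lower-order expression involving $\nabla\lambda$ and a quadratic factor in $\nabla v$. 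By \eqref{a-harm-ass1} both eigenvalues lie in $[\tilde a\min(1,\alpha),\tilde a\max(1,\beta)]$, so the operator is uniformly elliptic on $V$ under the standing hypothesis $\inf a>0$.

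For the quantitative $W^{2,2}$ estimate I would fix a cutoff $\eta\in C_c^\infty(V)$ with $\eta\equiv 1$ on $U$. Differentiating the PDE distributionally in each direction $x_k$, testing with $\eta^2 v_{x_k}$, summing over $k$, and integrating by parts produce the identity
\[
\int \eta^2\tilde a\,|D^2v|^2 + \int \eta^2\tilde a\,\frac{a'(s)s}{a(s)}\,\bigl|\nabla|\nabla v|_0\bigr|^2 \;=\; R_\eta,
\]
where $R_\eta$ collects Caccioppoli-type remainders whose integrands always carry a factor of $|\nabla\eta|$ or $|\nabla\lambda|$. In identifying the middle term on the left I would use the elementary algebraic identity
\[
\sum_{i,j,k}v_{x_i}v_{x_j}v_{x_ix_k}v_{x_jx_k}=|\nabla v|_0^2\,\bigl|\nabla|\nabla v|_0\bigr|^2.
\]

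The two terms on the left-hand side of the identity are then combined by means of the pointwise bound $\bigl|\nabla|\nabla v|_0\bigr|^2\le |D^2 v|^2$ together with the two-sided estimate $\alpha-1\le a'(s)s/a(s)\le \beta-1$ coming from \eqref{a-harm-ass1}, which yields the coercive lower bound $\min(1,\alpha)\int \eta^2\tilde a|D^2v|^2$. Every contribution in $R_\eta$ is of the schematic form $\int \eta|\nabla\eta|\,\tilde a\,|\nabla v|\,|D^2 v|$ (or involves $\nabla\lambda$ in place of $\nabla\eta$), possibly with universally bounded prefactors involving $a'(s)s/a(s)$ and $\|\lambda\|_{C^1(V)}$; a Young-inequality split with small parameter $\varepsilon$ absorbs the $|D^2 v|^2$-contributions into the left-hand side and leaves a residual $C(\varepsilon,\alpha,\beta,\|\lambda\|_{C^1(V)})\int_V\tilde a|\nabla v|^2$, yielding the announced bound $\|v\|_{W^{2,2}(U)}\le C\|v\|_{W^{1,2}(V)}$.

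The main obstacle is precisely the middle term on the left-hand side: it has the same quadratic order in $D^2 v$ as the principal positive term and, when $\alpha<1$, may carry negative sign, so a priori it threatens coercivity. Here the two-sided growth condition \eqref{a-harm-ass1} enters essentially: it guarantees $a'(s)s/a(s)\geq \alpha-1>-1$, and combined with the plain Kato-type inequality $\bigl|\nabla|\nabla v|_0\bigr|^2\le |D^2 v|^2$ this preserves the strictly positive coercivity constant $\min(1,\alpha)$. In higher dimensions the same computation would require a sharper refined Kato inequality, whereas in $n=2$ the elementary bound is sufficient, producing the short, almost self-contained argument announced in the statement.
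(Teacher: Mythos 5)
Your approach is the classical Caccioppoli/energy method (differentiate the equation, test with $\eta^2 v_{x_k}$, absorb via Young), which is genuinely different from the paper's argument. The paper instead passes to the \emph{normalized} non-divergence form $Lv=\Delta v+\cA\,\tfrac{\nabla v\,\nabla^2v\,(\nabla v)^T}{|\nabla v|_0^2}=\cA\langle\nabla\log\lambda,\nabla v\rangle_0$ (with $\cA=\tfrac{a'(s)s}{a(s)}$), proves a pointwise algebraic inequality of Cordes type, $\sum w_{ij}^2+2c_1\det\Hess\,w\le c_2(\sum a_{ij}w_{ij})^2$, and integrates using the purely two-dimensional fact that $\int_V\det\Hess\,w=0$ for compactly supported $w$.

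The difference is not cosmetic: your estimate has a genuine gap with respect to the claimed dependence of the constant. Testing the divergence form $\partial_i(\tilde a\,v_{x_i})=0$ with $\tilde a=a(\lambda^{-1}|\nabla v|_0)$ leaves the weight $\tilde a$ on \emph{both} sides, giving at best
\[
\min(1,\alpha)\int\eta^2\,\tilde a\,|D^2 v|^2\le C(\alpha,\beta,\|\lambda\|_{C^1(V)},U,V)\int_V\tilde a\,|\nabla v|^2.
\]
To convert this into the unweighted inequality $\|v\|_{W^{2,2}(U)}\le C$ with $C$ depending only on $U,V,\|v\|_{W^{1,2}(V)},\alpha,\beta,\|\lambda\|_{C^1(V)}$ you must divide by $\inf_V\tilde a$ on the left and bound $\tilde a\le\sup_V\tilde a$ on the right. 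The resulting factor $\sup_V\tilde a/\inf_V\tilde a$ is \emph{not} controlled by $\alpha,\beta$: under \eqref{a-harm-ass1} one only has $a(s_2)/a(s_1)\sim(s_2/s_1)^{\theta}$ with $\theta\in[\alpha-1,\beta-1]$, so this ratio depends on $\inf_V|\nabla v|_0$ and $\sup_V|\nabla v|_0$ (and blows up when $v$ has a critical point in $V$ and $\alpha<1$). This dependence is precisely what the paper's proof is built to avoid, and it is essential in the subsequent application in Proposition~\ref{lem-reg}, where the result must be applied with constants independent of $\epsilon$ to the regularized $a_\epsilon$ whose lower bound $\inf a_\epsilon$ degenerates as $\epsilon\to 0$. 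By contrast, the coefficients $a_{ij}(v)=\delta_{ij}+\cA\,\frac{v_iv_j}{|\nabla v|_0^2}$ and $\cA$ appearing in the paper's scheme are bounded solely in terms of $\alpha,\beta$, with $\tilde a$ never entering.

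A secondary point: your final remark that "in higher dimensions the same computation would require a sharper refined Kato inequality, whereas in $n=2$ the elementary bound is sufficient" is not accurate for your argument. The inequality $|\nabla|\nabla v|_0|^2\le|D^2v|^2$ is dimension-free, and your coercivity step $|D^2v|^2+\cA\,|\nabla|\nabla v|_0|^2\ge\min(1,\alpha)|D^2v|^2$ works verbatim for any $n$ (which is consistent with the fact that Caccioppoli-type $W^{2,2}$ bounds of this kind are available in all dimensions, cf.\ Cianchi--Maz'ya \cite{cima}). The genuine use of $n=2$ is in the paper's argument, via the identity $\det\Hess\,w=\tfrac12\div(\Delta w\,\nabla w)-\tfrac14\Delta|\nabla w|^2$ and the vanishing of $\int_V\det\Hess\,w$, which has no clean analogue in higher dimensions.
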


We remark that condition ensuring $\inf a(s)>0$ states that the equation in subject is nondegenerate elliptic.

Set $\mathcal A (s)=\frac{a'(s)s}{a(s)}$ and introduce the operator $L$ acting on $C^2$ functions defined by
\[
Lv=\Delta v + \mathcal A(\lambda^{-1}|\nabla v|_0)\frac{\nabla v \nabla^2 v (\nabla v)^T}{|\nabla v|_0^2}=\sum_{i,j=1}^2a_{ij}(v)v_{ij},
\]
where $a_{ij}(v)=\delta_{ij}+\mathcal A(\lambda^{-1}|\nabla v|_0)\frac{v_iv_j}{v_1^2+v_2^2}$. Then,  $v$ satisfies
\[
Lv=\cA(\lambda^{-1}|\nabla v|_0)
\langle \frac{\nabla \lambda}{\lambda}, \nabla v \rangle_0.
\]
We need the following result (see \cite[Theorem 2]{ta}).

\begin{prop}\label{prop:talenti}
	Let $w\in C^2(V)$ be a function such that $\supp(w)\Subset V$. Then
	\[
	\int_V \sum_{i,j=1}^2w_{ij}^2\le c\int_V (\sum_{i,j=1}^2a_{ij}(v)w_{ij})^2,
	\]
	for a constant $c>0$ which depends on $\alpha,\beta$.
\end{prop}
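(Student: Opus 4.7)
The plan is to combine two ingredients: the two-dimensional identity
\[
\int_V |D^2 w|^2\,dx = \int_V (\Delta w)^2\,dx,
\]
valid for every $w \in C^2(V)$ with $\supp(w)\Subset V$, and a pointwise Cordes-type bound that is automatic in the two-dimensional uniformly elliptic setting. The identity is immediate from the algebraic fact $|D^2 w|^2 - (\Delta w)^2 = -2\det(D^2 w)$ together with the divergence representation $\det(D^2 w) = \partial_1(w_1 w_{22}) - \partial_2(w_1 w_{12})$, so that $\int_V \det(D^2 w)\,dx = 0$ by the compactness of $\supp w$.

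For the pointwise step, I would set $A(x) := (a_{ij}(v(x)))$ and introduce the scalar function $\gamma(x) := \tr(A(x))/\tr(A(x)^2)$, which minimizes $|\gamma A - I|_F^2$ over $\gamma \in \R$. It satisfies the pointwise identity $\gamma\, \sum_{ij} a_{ij}(v) w_{ij} - \Delta w = \tr\bigl((\gamma A - I)\,D^2 w\bigr)$, so that Cauchy--Schwarz in the Frobenius inner product yields $\bigl|\gamma\, \sum_{ij} a_{ij}(v) w_{ij} - \Delta w\bigr| \le |\gamma A - I|_F \cdot |D^2 w|$ pointwise. Direct diagonalization gives $|\gamma A - I|_F^2 = (\mu_1-\mu_2)^2/(\mu_1^2+\mu_2^2)$, where $\mu_1, \mu_2$ denote the eigenvalues of $A$. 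Since in our setting $A = I + \cA(\lambda^{-1}|\nabla v|_0)\,\hat v \otimes \hat v$ has eigenvalues $1$ and $1 + \cA \in [\alpha, \beta]$ by \eqref{a-harm-ass1}, both eigenvalues lie in $[\alpha_*, \beta_*]$ with $\alpha_* := \min(\alpha, 1)$ and $\beta_* := \max(\beta, 1)$. This produces the uniform bound $|\gamma A - I|_F \le \eta$ with $\eta := (\beta_* - \alpha_*)/\sqrt{\alpha_*^2 + \beta_*^2} < 1$ depending only on $\alpha, \beta$; moreover $\|\gamma\|_{L^\infty(V)} \le \beta_*/\alpha_*^2$.

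Integrating the pointwise inequality, applying Minkowski's inequality, and invoking the two-dimensional identity, one obtains $\|D^2 w\|_{L^2(V)} = \|\Delta w\|_{L^2(V)} \le \bigl\|\gamma \sum_{ij}a_{ij}(v)w_{ij}\bigr\|_{L^2(V)} + \eta\|D^2 w\|_{L^2(V)}$, whence $(1-\eta)\|D^2 w\|_{L^2(V)} \le (\beta_*/\alpha_*^2)\bigl\|\sum_{ij}a_{ij}(v)w_{ij}\bigr\|_{L^2(V)}$. Squaring yields the desired inequality with $c := (\beta_*/\alpha_*^2)^2 (1-\eta)^{-2}$ depending only on $\alpha, \beta$.

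The conceptual core of the argument is the Cordes bound $\eta < 1$, which reduces to the elementary inequality $(\mu_1 - \mu_2)^2 < \mu_1^2 + \mu_2^2$ valid whenever $\mu_1\mu_2 > 0$; this is the automatic two-dimensional Cordes property and the reason the proof is short and essentially self-contained. In higher dimensions, supplementary restrictions on the spread of eigenvalues would be needed, which is precisely why the argument is carried out in dimension two only.
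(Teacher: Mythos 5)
Your proof is correct and rests on the same two conceptual ingredients as the paper's, namely the Cordes property of two-by-two uniformly elliptic matrices and the integral identity $\int_V |D^2w|^2 = \int_V (\Delta w)^2$ for $w$ with compact support in $V$ (which reduces to $\int_V \det D^2w = 0$ via a divergence representation of $\det D^2w$; as in the paper, this step tacitly uses a $C^3$ approximation of $w$). The routes through the Cordes mechanism are, however, organized differently. The paper diagonalizes $\Hess\,w$, the unknown's Hessian, and establishes the explicit pointwise inequality $\sum_{ij}w_{ij}^2 + 2c_1\det\Hess\,w \le c_2\bigl(\sum_{ij}a_{ij}(v)w_{ij}\bigr)^2$ by a discriminant computation showing that a hyperbola and a line are disjoint; integration then kills the determinant term. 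You instead diagonalize the coefficient matrix $A$, choose the optimal normalizer $\gamma = \tr A/\tr A^2$, bound $|\gamma A - I|_F$ by a uniform $\eta<1$ (the Cordes bound, automatic in dimension two since both eigenvalues of $A$ are positive), apply Frobenius Cauchy--Schwarz pointwise, and close the estimate at the integral level with Minkowski's inequality and the identity above. Your ``distance-to-the-Laplacian'' formulation avoids the discriminant computation and is arguably cleaner, whereas the paper's version produces the explicit pointwise quadratic-form inequality as an intermediate object; both deliver a constant $c$ depending only on $\alpha,\beta$, as claimed.
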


\begin{proof}
Without loss of generality, by approximation we can assume that $w\in C^3(V)$. Fix $x\in V$. Consider the real symmetric $2\times 2$ matrix $\mathfrak a = (a_{ij}(v))$ at $x$. Note that $\tr\fra=2+\cA$ and $\tr\fra^2=\sum_{i,j=1}^2a_{ij}^2=2+2\cA+\cA^2$. As $x$ and $w$ are fixed, in what follows we omit the argument of $\cA=\cA(\lambda^{-1}(x)|\nabla w|_0)$. Choose constants $c_1,c_2$ such that
	\begin{align}\label{e:cordes}
	c_1&>\frac{1+\beta^2}{2\alpha}\geq\frac{(\cA+1)^2+1}{2(1+\cA)}=\frac{2+2\cA+\cA^2}{2+2\cA}=\frac{\tr\fra^2}{(\tr\fra)^2-\tr\fra^2}\\
	c_2&=\frac{c_1^2-1}{2c_1\alpha-1-\beta^2}\ge\frac{c_1^2-1}{c_1((\tr\fra)^2-\tr\fra^2)-\tr\fra^2}.
	\nonumber\end{align}
The last inequality is due to the fact that denumerator of the right-hand side can be written as $2c_1(1+\cA)-1-(1+\cA)^2$ and it is thus positive when $1+\cA\in[\alpha,\beta]$ due to the choice of $c_1$.
We observe that the first inequality \eqref{e:cordes} corresponds to what is known in the literature as Cordes' condition, cf. formula (1.7) in~\cite{mawe}. Indeed, in our notation and for $n=2$ the Cordes' condition reads:
\[
\tr\fra^2 \leq (1+\delta)^{-1} (\tr\fra)^2
\]
for some $\delta\in (0,1]$, that is,
\[
\frac{1}{\de}\ge \frac{\tr\fra^2}{(\tr\fra)^2-\tr\fra^2}.
\]
This latter is verified by the choice $\delta=c_1^{-1}<\frac{2\alpha}{1+\beta^2}$.

Next, we need the following algebraic observation.
\medskip

\noindent
{\bf Claim:}
\begin{equation}\label{e:alg lemma}
\sum_{i,j=1}^2w_{ij}^2+2c_1\det\Hess\, w\le c_2 (\sum_{i,j=1}^2a_{ij}(v)w_{ij})^2.
\end{equation}

\emph{Proof: } Since $\frp:=\Hess\, w$ is a real symmetric matrix, it holds that $\frp=\frm^{-1}\frk\frm$ for some orthogonal matrix $\frm$ and some diagonal matrix $\frk=\left(\begin{array}{cc}k_1&0\\0&k_2\end{array}\right)$. Define $\frb=\frm^{-1}\fra\frm$. We have
$\sum_{i,j=1}^2w_{ij}^2=\tr \frp^2 =\tr \frk^2$,  $\det\Hess\, w=\det \frp = \det \frk$ and 
\[
\sum_{i,j=1}^{2} a_{ij}(v)w_{ij}=\tr(\fra\frp)=\tr(\frb\frk)=b_{11}k_1+b_{22}k_2.
\]
Hence \eqref{e:alg lemma} is implied by 
\[
k_1^2+k_2^2+2c_1k_1k_2\le c_2(b_{11}k_1+b_{22}k_2)^2,\quad\hbox{for all } k_1,k_2\in\R.
\]
Up to rescaling $(k_1,k_2)$ it is enough to prove that
\[ k_1^2+k_2^2+2c_1k_1k_2\le c_2,\quad\hbox{for all } k_1,k_2\in\R\ : \ b_{11}k_1+b_{22}k_2=1.
\]
Note that 
$k_1^2+k_2^2+2c_1k_1k_2= c_2$ is a hyperbola whose symmetry axis is the line $k_1=k_2$. Accordingly, it is enough to prove that this hyperbola does not intersect the line $b_{11}k_1+b_{22}k_2=1$, i.e. that the system 
\[
\begin{cases}
	k_1^2+k_2^2+2c_1k_1k_2= c_2\\b_{11}k_1+b_{22}k_2=1
\end{cases}
\] admits no solutions. Upon computing $k_1$ from the second equation and substituting it in the first one, yields the following second order equation in $k_2$
\[
(b_{22}^2+b_{11}^2-2c_1b_{11}b_{22})k_2^2+2(c_1b_{11}-b_{22})k_2 + (1-c_2b_{11}^2)=0
\]
whose discriminant $\Delta$ satisfies  
\begin{align*}
\frac{\Delta}{4b_{11}^2} &=b_{11}^{-2} \left[(c_1b_{11}-b_{22})^2- (b_{22}^2+b_{11}^2-2c_1b_{11}b_{22})(1-c_2b_{11}^2)\right]\\
&=c_1^2-1-c_2(2c_1b_{11}b_{22}-b_{11}^2-b_{22}^2)\\
&=c_1^2-1-c_2\left[c_1((b_{11}+b_{22})^2-(b_{11}^2+b^2_{22}))-(b_{11}^2+b^2_{22})\right]\\
&<c_1^2-1-c_2[c_1((\tr\fra)^2-\tr\fra^2)-\tr\fra^2]<0,
\end{align*}
as $\tr\fra = \tr\frb$ and $\tr\fra^2=\tr\frb^2=\sum_{i,j=1}^2b_{ij}^2\ge b_{11}^2+b_{22}^2$. Thus, the proof of the claim is complete.

\smallskip

An explicit computation shows that for any $C^3$ function $w$,
\[
\det \Hess\,w = \frac{1}{2}\div(\Delta w \nabla w	) -\frac{1}{4}\Delta|\nabla w|^2.
\]
By the assumptions of Proposition~\ref{prop:talenti}, $w$ is zero in a neighborhood of $\partial V$, and so the Stokes theorem implies $\int_V \det \Hess\,w = 0$.
 Therefore,  we conclude the proof of Proposition \ref{prop:talenti} by integrating \eqref{e:alg lemma} over $V$. 
\end{proof} 

Once we have Proposition \ref{prop:talenti}, the proof ot Theorem \ref{th:regularity} can be done by mimicking the proof of Theorem 9.11 in \cite{gt}.

\begin{proof}[Proof of Theorem \ref{th:regularity}]
Let $\vp\in C^\infty_c(\Omega)$ such that $\vp\equiv 1$ on $U$ and $\supp\,\vp\Subset V$. Furthermore, let $|\nabla \phi|\leq \frac{c}{\dist(\partial U, \partial V)}$ and let similar growth condition hold for $|\nabla^2 \phi|$.
 By Proposition \ref{prop:talenti} applied to $\vp v$, we obtain that
	\begin{equation}\label{e:integral Talenti}
\int_{U} \sum_{i,j=1}^2 v_{ij}^2\le\int_{V} \sum_{i,j=1}^2(\vp v)_{ij}^2\le 
c\int_V \left(\sum_{i,j=1}^2a_{ij}(v)(\vp v)_{ij}\right)^2.
\end{equation}
We compute
\begin{align}\label{e:leibnitz}
\left(\sum_{i,j=1}^2a_{ij}(v)(\vp v)_{ij}\right)^2
&=\left(\sum_{i,j=1}^2a_{ij}(v)\vp_{ij} v + 2\sum_{i,j=1}^2a_{ij}(v)\vp_i v_{j} +  \vp Lv\right)^2\\
&\le 3\sum_{ij}\left(\|a_{ij}(v)\|^2_{L^\infty(V)}\|\vp_{ij}\|^2_{L^\infty(V)}  v^2 +\|a_{ij}(v)\|^2_{L^\infty(V)}\|\vp_i\|^2_{L^\infty(V)}  v_j^2\right) \nonumber\\
&+ \frac34\|\vp\|_{L^\infty(V)}^2 \|\cA(\lambda^{-\frac12}|\nabla v|_0)\|_{L^\infty(V)}^2
\| \nabla\log \lambda\|_{L^\infty(V)}|\nabla v|^2. \nonumber
\end{align}
Since $\|a_{ij}(v)\|_{L^\infty(V)}$ and $\|\cA(\lambda^{-\frac12}|\nabla v|_0)\|_{L^\infty(V)}$ can be upper bounded in terms of $\alpha$ and $\beta$ and $\|\vp\|_{C^2(V)}$ can be estimated in terms of $U$ and $V$, inserting \eqref{e:leibnitz} into \eqref{e:integral Talenti} concludes the proof.  

\end{proof}

\end{document}